\newcommand{\ignore}[1]{ }
\newtheorem{defn}{Definition}
\newtheorem{lemma}{Lemma}
\newtheorem{theorem}{Theorem}
\newtheorem{prop}{Proposition}
\newtheorem{conjecture}{Conjecture}
\newtheorem{cor}{Corollary}
\def\dtv{d_{\rm TV}}
\def\e{\mathbb{E \,}}
\def\var{{\rm Var \,}}
\def\p{\mathbb{P}}
\def\R{\mathbb{R}}
\def\bp{{\bf p}}
\def\bq{{\bf q}}
\def\bpnx{{\bp (n,x)}}
\def\bpnxn{{\bp (n,x_n)}}
\def\DD{D(\bp)}
\def\DDLR{D(\bp,\bq)}
\def\DDLL{D(\bp,\bp)}
\def\psn{\bp^{(n)}}
\def\qsn{\bq^{(n)}}
\def\DDn{D(\psn)}
\def\PRZ{\mathbb{P}_{\mathbb{Z}_+}}
\def\Zp{\mathbb{Z}_+}
\title[Random Sampling of Pairs]{On the Random Sampling of Pairs, with %Pedagogical 
%Pedalogical
Pedestrian
examples}
\author[Arratia]{Richard Arratia}
\address[Richard Arratia]{Department of Mathematics, University of Southern California,
Los Angeles CA 90089.}
\email{rarratia@math.usc.edu}
\author[DeSalvo]{Stephen DeSalvo}
\address[ Stephen DeSalvo]{Department of Mathematics, UCLA
Los Angeles CA 90095.}
\email{stephendesalvo@math.ucla.edu}
\date{June 1, 2013}
\begin{document}

\ignore{
\begin{abstract}
Suppose one desires to randomly sample a pair of objects such as socks, hoping to get a matching pair.
One approach is 
to sample two at a time, over and over without memory, until a matching pair is found.  A second approach is to sample sequentially, one at a time, with memory, until the same color has been seen twice.

We study the 
difference between these two methods.  The input is a discrete probability distribution on
colors, describing what happens when one sock is sampled.  
The output, a number we call the \emph{discrepancy}
of the input distribution, is the total variation distance between the two derived distributions.
 
It is easy to determine when the two pair-color distributions come out equal; that is, 
which distributions have discrepancy zero, but hard to determine the largest possible discrepancy.  
We give a plausible conjecture
for the general situation of a finite number of colors, and give an exact computation of a constant which is
a plausible candidate for the supremum of the discrepancy over all discrete probability distributions. 

We briefly consider the more difficult case where the objects to be matched into pairs are of two different kinds, such as male-female  or  left-right.
\end{abstract}
}
%\ignore{
\begin{abstract}
Suppose one desires to randomly sample a pair of objects such as socks, hoping to get a matching pair.
Even in the simplest situation for sampling, which is sampling \emph{with} replacement, 
the innocent phrase ``the distribution of the color of a matching pair'' is ambiguous.
One interpretation is that we condition on the event of getting a match between two random socks;  this corresponds to sampling two at a time, over and over without memory, until a matching pair is found.  A second interpretation is to sample sequentially, one at a time, with memory, until the same color has been seen twice.

We study the 
difference between these two methods.  The input is a discrete probability distribution on
colors, describing what happens when one sock is sampled.  There are two derived distributions --- the pair-color distributions under the two methods of getting a match.  The output, a number we call the \emph{discrepancy}
of the input distribution, is the total variation distance between the two derived distributions.
 
It is easy to determine when the two pair-color distributions come out equal, that is, 
to determine
which distributions have discrepancy zero, but hard to determine the largest possible discrepancy.  
We find the exact extreme for the case of two colors, by analyzing the roots of a fifth degree polynomial in one variable.
We find the exact extreme for the case of three colors, by analyzing the 49 roots of a variety spanned by two seventh-degree polynomials in two variables.  We give a plausible conjecture
for the general situation of a finite number of colors, and give an exact computation of a constant which is
a plausible candidate for the supremum of the discrepancy over all discrete probability distributions. 

We briefly consider the more difficult case where the objects to be matched into pairs are of two different kinds, such as male-female  or  left-right.
\end{abstract}
%}

\maketitle

\tableofcontents

%\section{Introduction, with a simple example}
\section{Motivation}

\ignore{
%The problem that inspires us is:  Supposing a drawer has 6 white and 2 black socks, or 20 white and 60 black,
%or 2000 white and 6000 black; 
%how many socks must one remove to ensure a pair of matching color?   
%The problem that we address here is different:  what is the distribution of the color of a matching pair?

The problem that inspires us:  Suppose a drawer with black and white socks and has at least one pair of matching colors.
% in proportion $1:3$; that is, there are three times as many white socks as black socks.
How many socks must one remove to ensure a pair of matching color?   The answer, 3, follows from the pigeon-hole principle, and leads us to the problem that we address in this paper: what is the distribution of the color of a matching pair?
%the problem that we address here is different:  what is the distribution of the color of a matching pair?
}

The problem that inspires us:  Suppose a drawer  has 12 white and 4 black socks.
How many socks must one remove to ensure a pair of matching color?   
The answer, 3, illustrates %follows from 
the pigeon-hole principle.
The statement of detailed counts, 12 and 4, was arbitrary, %was thrown in as a red herring, %which naturally
%, and 
but
leads  to the problem that we address in this paper: what is the distribution of the color of a matching pair?

To simplify,  we take the limit as the number of socks in the drawer goes to infinity, while the proportions remain
constant, e.g., seventy five percent white and twenty five  percent  black.

% DeSalvo Edit 8-8-12 ------
%But notice, there are %still 
%two sensible methods for choosing ``a matching pair."  
% replaced with   -------
We consider two sensible methods for choosing ``a matching pair."
% --------

\begin{itemize}
\item[(M1)] Select objects two at a time until a pair of the same color is selected in a single round;
\item[(M2)] Select objects one at a time  until the first pair of the same color is found.
\end{itemize}

%Write $N$ for the number of socks inspected, in either method.
%Method 2 corresponds to the pigeon-hole principle;  in method 1 the number of socks to inspect is unbounded.
For a second example, if there are 365 equally likely colors for socks, then, under Method 2  the maximum number of socks inspected is 366, but the expected number is $23.6166 \ldots$ .\footnote{The exact computation is $\e N =\sum_{k \ge 0} \p(N>k) = \sum_{k=0}^{364} (365)_k/365^k$, with the notation $(n)_k = n!/(n-k)!$ for $n$
falling $k$.}
%Note, this exact value is not the familiar 23, which is very nearly the median value.  
In contrast, the expected number of pairs inspected by Method 1 is exactly 365,  hence the expected number of socks inspected is 730.
However, our focus is not on the \emph{number} of socks inspected, but rather, on the distribution of the \emph{color} of the matching pair.
\ignore{
A longer version of the longer winded discussion of N would have, for method 2,  P(N>k) = P(D_k)  where D_k is the event that k people have distinct birthdays, with  P(D_k)  = (365)_k / 365^k, so  EN = sum_{k \ge 0} P(N>k) = sum_{k=0}^{365}  (365)_k / 365^k = (exact value).  And with f_2 = 1/365, Method 1 involves the geometric distribution,  with  P(N/2 \ge k) = (1-f_2)^k  so that E N = 2/f_2.}

In our first example, under method 1 the odds for a white pair over a black pair are $(12/16)^2$ to $(4/16)^2$; equivalently
$12^2$ to $4^2$, or $3^2$ to $1^2$, so that 9/10 of the time the pair is white, and 1/10 of the time it is black.  Under
method 2, the outcomes resulting in a white pair correspond to $ww,bww,wbw$, with total probability
$(.75)^2 + 2 (.75)^2 (.25)^2 = 27/32$, and the outcomes resulting in a black pair correspond to $bb,wbb,bwb$, with 
total probability
$(.25)^2 + 2 (.75) (.25)^2 = 5/32$.

To summarize, the input is a distribution on colors, $\bp=(.75,.25)$, and there are two outputs:  under Method 1, the color of a pair is white with probability $.9$, and black with probability $.1$, while under Method 2, color of a pair is white with probability $27/32$, and black with probability $5/32$.
\begin{eqnarray*}
 \bp & = & (.75,.25) \\
 M1(\bp) &=& (.9,.1)  \\ 
 M2(\bp)  & =& ( .84375,.15625) .
\end{eqnarray*}

Some natural questions, for an arbitrary 
discrete   % added NOv 8, 2012 
distribution $\bp$ for the color of a single sock:
\begin{eqnarray*}
& \mbox{(Q1)} & \mbox{When does $M1(\bp)=M2(\bp)$?} \\
&  \mbox{(Q2)}& \mbox{ How far apart can $M1(\bp)$ and $M2(\bp)$ be from each other?}
\end{eqnarray*}

There are practical algorithms \cite{PDC} for sampling, exploiting the birthday paradox, that require getting a matching pair whose color has  the distribution (M1), but under a naive \emph{opportunistic} implementation, would only find a pair whose color is distributed according to (M2).   
Question (Q2) above is about quantifying the error that would result from using the opportunistic implementation.

\ignore{
Clearly these two methods get a different distribution on the color of the matching pair.   A standard and natural way to
quantify the difference is the \emph{total variation distance}, see Section \ref{sect dtv}, which for our simple example has the value
$$
  \left| \frac{9}{10} - \frac{27}{32} \right| = 
  \frac{1}{2} \left( \left| \frac{9}{10} - \frac{27}{32} \right| + \left| \frac{1}{10} - \frac{5}{32} \right| \right).
$$

To summarize, the input was the distribution on colors of single socks, $\bp = (3/4,1/4)$, and the output 
$D(\bp)$ is the total variation distance between the distributions on colors of a pair coming from the two natural methods of picking a pair, $D(\bp) =9/10-27/32=.05625$.   
}  % end ignore 

%\section{Introduction, with the general case}
\section{Pair-derived distributions}

In general, we write $S$ for the random color of a single sock, and describe the initial distribution of colors
with 
$$  p_i := \p(S=i).$$
When the number of colors is finite, say $n+1$, then we let the colors be $0,1,2,\ldots,n$, and the distribution of $S$ is given by  $\bp=(p_0,p_1,\ldots,p_n)$.  Our initial example had $n+1=2$, $\bp=(p_0,p_1)=(.75,.25)$.
When the number of colors is infinite, we take the colors to be $0,1,2,\ldots$, and then
$\bp=(p_0,p_1,p_2,\ldots)$.    
%Without loss of generality we may assign color numbers in order of popularity, that is, 
%$$  p_0 \ge p_1 \ge \cdots  \ge 0.$$

Method 1 may be described as the color $X$ of a pair of randomly chosen socks, conditional on getting a match.  More precisely, the two chosen socks have colors $S$ and $S'$ and are independent and identically distributed, with $\p_i = \p(S=i)$.
We write 
\begin{equation}\label{def f2}
f_2 := \p(S=S')  = \sum_i \p(S=S'=i) = \sum_{i} p_i^2
\end{equation}
for the probability that two randomly chosen socks match,
so
\begin{equation}\label{def M1}
  \p(X=i) = \p(S=i|S=S') = \frac{p_i^2}{f_2}.
\end{equation}

Method 2 involves a sequential procedure:  pick socks one at a time until a duplicate color is found.  Suppose that when this duplicate is found, there have been $k$ \emph{other} colors, with $k=0,1,2,\ldots$.  Write
$i$ for the duplicate color, and $J=\{j_1,\ldots,j_k\}$ for the single colors, so that $i \notin J$ and $|J|=k$.
The second occurrence of color $i$ is at time $k+2$, and for the first $k+1$ socks, any permutation of the colors in
$\{i\} \cup J$ is valid.  Hence the color $Y$ of the matching pair found by Method 2 has distribution given by
\begin{equation}\label{def M2}
  \p(Y=i) =  p_i^2  \sum_{k} (k\! + \! 1)! \,  \sum_J    p_{j_1} \ldots p_{j_{k}}.
\end{equation}
In the sum above, $|J|=k$ and $i \notin J$.

\ignore{
We will consider $D(\bp)$ for arbitrary discrete probabilities $\bp$.

To review the two methods, when the support of $\bp$ has size $n+1$, the sequential method, Method 2,
is guaranteed to finish with at most  $n+2$ individual socks selected.  Regardless of whether the support of
$\bp$ is finite or infinite, for Method 1 there is no finite upper bound on the number of pairs that might be examined.  With the notation
\begin{equation}\label{def f2}
f_2 = \sum_{i \ge 0} p_i^2,
\end{equation}
the number of pairs examined under Method 1 has a geometric distribution, with mean  $1/f_2$.

A discrete distribution is said to be \emph{uniform} if it has finite support, say of size $n+1$, and for each color $i$ in the support, $p_i=1/(n+1)$.  It is easy to see that is $\bp$ is uniform, then
$\DD=0$.

Some natural questions:
\begin{eqnarray*}
& \mbox{(Q1)} & \mbox{Does $\DD=0$ imply that $\bp$ is a uniform distribution?} \\
&  \mbox{(Q2)}& \mbox{What is the largest possible value of $\DD$?}\\
& \mbox{(Q3)} & \mbox{Does $0 = \lim_{t \searrow 0} \sup_{\bp: p_0 \le t} \DD$?}
\end{eqnarray*}

{\bf note to authors}  Maybe discuss birthday problem, related literature, e.g. Schur concavity, ...
{\bf Should we include} e.g. The classic sock-picking problem starts with $n$ distinct colors of socks in a drawer, at least two socks of each color, and asks for the minimum number of socks needed before one can be certain that at least one pair of the same color is selected.  By the pigeonhole principle this number is $n+1$, since after $n$ selections we could still have one of each type, and the next selection is guaranteed to provide a match.
{\bf Should we include} The birthday problem asks for the probability that in a room with exactly $n$ people, at least two share a birthday.  Assuming that each birthday is equally likely, it is well known that for $n=23$ this probability exceeds one-half.
 } % end ignore
 
\section{When are the two pair-picking methods the same?} 

 A discrete distribution is said to be \emph{uniform} if it has finite support, say of size $n+1$, and for each color $i$ in the support, $p_i=1/(n+1)$.   %It is easy to see that
 The following proposition is trivial.\footnote{
Because, in fact, if $\bp$ is a  uniform distribution, then  both $M1(\bp)$ and $M2(\bp)$ are equal to the original uniform distribution --- by the principle of ignorance, all possible colors are alike, and hence, equally likely under each of the derived methods.  We invite the reader to consider, is ``principle of ignorance,'' i.e. invoking symmetry, without presenting details as in \eqref{equal}, an adequate proof?}
 \begin{prop}
 if $\bp$ is uniform, then $M1(\bp)  = M2(\bp)$.
 \end{prop}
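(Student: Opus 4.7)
The plan is to make the ``principle of ignorance'' argument from the footnote completely rigorous by formalizing it as an equivariance statement. For each permutation $\sigma$ of the color set $\{0,1,\ldots,n\}$, let $\sigma\bp$ denote the distribution with $(\sigma\bp)_i = p_{\sigma^{-1}(i)}$. I would first observe, directly from the defining formulas \eqref{def M1} and \eqref{def M2}, that both pair-derived maps are equivariant: $M_j(\sigma\bp) = \sigma M_j(\bp)$ for $j=1,2$. For $M1$ this is immediate because $f_2 = \sum_i p_i^2$ is invariant under relabeling of colors. For $M2$, the sum in \eqref{def M2} is indexed by a subset $J$ disjoint from $i$, so relabeling $i$ to $\sigma(i)$ and $J$ to $\sigma(J)$ is a bijection that preserves the summand $p_{j_1}\cdots p_{j_k}$.

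Once equivariance is in hand, the proof is immediate. A uniform $\bp$ satisfies $\sigma\bp = \bp$ for every permutation $\sigma$ on the support, so by equivariance $\sigma M_j(\bp) = M_j(\bp)$ for every $\sigma$ and every $j$. The only probability distribution on the finite set $\{0,\ldots,n\}$ that is invariant under the full symmetric group is the uniform distribution itself, and hence $M1(\bp) = M2(\bp) =\bp$.

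For readers who prefer a direct calculation, I would also include the verification. Method 1 is one line: $f_2 = (n+1)(n+1)^{-2} = (n+1)^{-1}$, so $M1(\bp)_i = p_i^2/f_2 = (n+1)^{-1}$. For Method 2, the color-symmetry of \eqref{def M2} under uniform input shows that $\p(Y=i)$ is the same value for every $i$, and since these $n+1$ equal values sum to $1$, each equals $(n+1)^{-1}$. This shortcut avoids verifying the identity $\sum_{k=0}^{n}(k+1)!\binom{n}{k}/(n+1)^k = n+1$ that would otherwise appear.

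The only real obstacle here is the meta-question raised in the footnote: whether the symmetry argument alone is adequate. My answer is yes, as long as one names the acting group and records equivariance of $M1, M2$ explicitly; the apparent softness of ``principle of ignorance'' then evaporates. The direct computation above serves as a reassuring double-check rather than an essential step.
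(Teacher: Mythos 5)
Your proof is correct and follows essentially the same route as the paper, which argues the proposition by symmetry (the ``principle of ignorance'') in its footnote; you simply make that argument rigorous by recording the equivariance $M_j(\sigma\bp)=\sigma M_j(\bp)$ and noting that the unique exchangeable distribution on a finite set is uniform. The direct computation you append is also correct and matches the detail the paper points to via \eqref{equal}, so no further work is needed.
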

The converse is true, but not so easy to prove;  we will first prove an ancillary result in Lemma \ref{ratio lemma} 
%place the brunt of the proof in Lemma \ref{ratio lemma},
and then summarize %the result 
in Theorem \ref{uniform thm}.

\begin{lemma}\label{ratio lemma}
Under Method 2, as specified by \eqref{def M2},
\begin{equation}\label{ratio nonstrict}
\mbox{if } p_i \ge p_j >0 , \mbox{ then } \frac{\p(Y=i)}{p_i^2} \leq \frac{\p(Y=j)}{p_j^2},
\end{equation}
hence
\begin{equation}\label{equal}
\mbox{if } p_i=p_j >0 \mbox{ then } \p(Y=i)=\p(Y=j).
\end{equation}
Also,
\begin{equation}\label{eq ratio}
\mbox{if } p_i > p_j >0 , \mbox{ then } \frac{\p(Y=i)}{p_i^2} < \frac{\p(Y=j)}{p_j^2}.
\end{equation}
\end{lemma}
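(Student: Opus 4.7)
The plan is to rewrite $A_i := \p(Y=i)/p_i^2$ in a form that makes the comparison with $A_j$ into the simple difference of two things that share a common part. Starting from \eqref{def M2},
$$
A_i = \sum_{k \ge 0} (k+1)! \sum_{\substack{|J|=k \\ i \notin J}} \prod_{l \in J} p_l,
$$
I would split the inner sum according to whether $j \in J$ or not. Let
$$
B_m := \sum_{\substack{|J|=m \\ i,j \notin J}} \prod_{l \in J} p_l,
$$
with the convention $B_0 = 1$ (the empty product). Subsets $J$ with $i \notin J$ and $j \in J$ are of the form $J = J' \cup \{j\}$ with $J'$ avoiding both $i$ and $j$, contributing $p_j B_{k-1}$ at level $k$. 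So
$$
A_i \;=\; \sum_{k \ge 0}(k+1)!\,B_k \;+\; p_j \sum_{m \ge 0}(m+2)!\,B_m,
$$
after reindexing $m = k-1$. By the symmetric argument with the roles of $i$ and $j$ reversed,
$$
A_j \;=\; \sum_{k \ge 0}(k+1)!\,B_k \;+\; p_i \sum_{m \ge 0}(m+2)!\,B_m.
$$

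Subtracting,
$$
A_j - A_i \;=\; (p_i - p_j)\sum_{m \ge 0}(m+2)!\,B_m.
$$
Since each $B_m \ge 0$ and $B_0 = 1$, the sum on the right is strictly positive (at least $2$). From this single identity all three assertions drop out: if $p_i \ge p_j$ then $A_j \ge A_i$, giving \eqref{ratio nonstrict}; equality $p_i = p_j$ forces $A_i = A_j$ and hence $\p(Y=i) = p_i^2 A_i = p_j^2 A_j = \p(Y=j)$, giving \eqref{equal}; and $p_i > p_j$ yields strict inequality in \eqref{eq ratio}.

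The only real obstacle is a bookkeeping one — making sure the reindexing when $j \in J$ (respectively $i \in J$) is done cleanly and that the remaining sum over subsets $J$ avoiding both $i$ and $j$ is genuinely the same object in both expressions. Once $B_m$ is introduced as a symmetric quantity in $i,j$, the algebra is forced and the lemma reduces to the observation that $\sum_{m}(m+2)!\,B_m > 0$.
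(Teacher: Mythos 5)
Your proof is correct and follows essentially the same route as the paper's: both decompose the inner sum over subsets $J$ avoiding $i$ according to whether $j\in J$, reducing the comparison to a nonnegative sum (your $\sum_m (m+2)!\,B_m$) multiplied by $p_i-p_j$. The only cosmetic difference is that you aggregate over all $k$ into one identity, while the paper compares $t(i,k)$ and $t(j,k)$ term by term; the strict positivity via $B_0=1$ is the same observation in both.
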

\begin{proof}
Assume $p_i \ge p_j >0$. Define $t(i,k)$ to be the inner sum of \eqref{def M2}, so that 
$$
\frac{\p(Y=i)}{p_i^2} = \sum_k (k\!+\!1)!  \ t(i,k).
$$ 
 To prove \eqref{ratio nonstrict} it suffices to show that if $p_i \ge p_j >0$ then $t(i,k) \leq t(j,k)$ for all $k$, and to further prove 
 \eqref{eq ratio}, it suffices to show that if $p_i >p_j$ then $t(i,k) < t(j,k)$    for at least one $k$.  With sums always taken over sets of size $k$, 
$$
t(i,k) = \sum_{i \notin J} p_{i_1}\cdots p_{i_k}  =  \sum_{i \notin J, j \in J} p_{i_1}\cdots p_{i_k}  +  \sum_{i,j \notin J} p_{i_1}\cdots p_{i_k} ,
$$
that is, in the sum over sets $J$ excluding $i$, we take cases according to whether or not $j \in J$.
With a similar decomposition of $t(j,k)$, taking  the difference
 yields
% From AMM referee's suggestion
$$
t(i,k)-t(j,k)=k(p_j - p_i) \sum_{i,j\notin J} p_{i_1}\cdots p_{i_{k-1}}.
%\t(i,k) - t(j,k) = k (p_i - p_j) 
$$
% Original finish of proof
\ignore{ $$
t(i,k)-t(j,k)  =   \sum_{i \notin J, j \in J} p_{i_1}\cdots p_{i_k}    - \sum_{i \in J, j \notin J} p_{i_1}\cdots p_{i_k}.
$$
There is a bijection between sets $J$ for the first sum and sets $J$ for the second sum, that 
   substitutes $i$ for $j$. 
%   Since $p_i > p_j$, it then follows that $t(i,k) < t(j,k)$ for all $k>0$, and $t(i,0)=t(j,0)$.
From $p_i \ge p_j$ it follows that for all $k$, $t(i,k) \le t(j,k)$, and further, when $p_i > p_j$, we have $t(i,1) < t(j,1)$.}
\end{proof}

\begin{theorem}\label{uniform thm}
Over all discrete distributions $\bp$, the derived distributions of $X$ and $Y$, given by \eqref{def M1} and
\eqref{def M2}, are equal if and only if $\bp$ is a uniform distribution.
\end{theorem}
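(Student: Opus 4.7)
The plan is to combine the already-noted proposition (uniform $\Rightarrow$ equal derived distributions) with Lemma \ref{ratio lemma} to get the converse. The key observation is that Method 1 produces a distribution whose ratio to $p_i^2$ is the \emph{constant} $1/f_2$ across the support, so equality $M1(\bp) = M2(\bp)$ forces the Method 2 ratios $\p(Y=i)/p_i^2$ to also be constant on the support; the lemma then says this rules out any strict inequalities among the positive $p_i$.

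First I would dispatch the easy direction by invoking the proposition above. For the nontrivial direction, assume $M1(\bp) = M2(\bp)$, and let $I = \{i : p_i > 0\}$ denote the support. From \eqref{def M1},
\begin{equation*}
\frac{\p(X=i)}{p_i^2} = \frac{1}{f_2} \quad \text{for every } i \in I,
\end{equation*}
so the assumed equality forces $\p(Y=i)/p_i^2 = 1/f_2$ for all $i \in I$ as well. In particular, this ratio is the same for every pair $i,j \in I$. Now apply the contrapositive of \eqref{eq ratio} from Lemma \ref{ratio lemma}: if we had two indices $i,j \in I$ with $p_i > p_j > 0$, then $\p(Y=i)/p_i^2 < \p(Y=j)/p_j^2$, contradicting constancy. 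Hence no strict inequalities occur among the $p_i$ with $i \in I$, i.e., all positive probabilities are equal to a common value $c > 0$.

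Finally I would rule out an infinite support. Since $\sum_{i \in I} p_i = 1$ and each term equals $c > 0$, the support $I$ must be finite, with $|I| \cdot c = 1$. Writing $|I| = n+1$, we get $p_i = 1/(n+1)$ for each $i \in I$ and $p_i = 0$ otherwise, which is exactly the definition of a uniform distribution. This completes the converse, and hence the theorem.

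The main obstacle was already absorbed into Lemma \ref{ratio lemma}; once the strict monotonicity of the ratio $\p(Y=i)/p_i^2$ in $p_i$ is in hand, the remainder is a short deduction, so I do not anticipate a real difficulty in the write-up beyond being careful to separate the cases $i \in I$ versus $i \notin I$ and to handle the possibility of an infinite support.
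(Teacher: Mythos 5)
Your proof is correct and follows essentially the same route as the paper's: both reduce the converse to the strict inequality \eqref{eq ratio} of Lemma \ref{ratio lemma}, the paper arguing contrapositively (non-uniform gives some $p_i>p_j>0$, hence $\p(X=i)/\p(X=j) > \p(Y=i)/\p(Y=j)$) while you phrase the same fact as constancy of $\p(Y=i)/p_i^2$ on the support. Your explicit observation that equal positive masses force a finite support is a point the paper leaves implicit, but it changes nothing substantive.
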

\begin{proof}

% AS THE REVIEWER COMMENTED, p uniform implies L(X) = L(Y) is already stated as easily seen in the intro paragraph of this section.
%Assume first that $\bp$ is a uniform distribution, say over $n+1$ colors, so that for all $i,j$ in the support of $\bp$, we have $p_i=p_j = 1/(n+1) $.   For $i,j$ both in the support of $\bp$, it is obvious from \eqref{def M1} that $p_i=p_j$ implies $\p(X=i)=\p(X=j)$, and \eqref{equal} shows that  
%$\p(Y=i)=\p(Y=j)$.  Hence for $i$ in the support of $\bp$,  $\p(X=i)=1/(n+1)=\p(Y=i)$, implying that $X$ and $Y$ have the same distribution.
 
%To prove the opposite direction, s
Suppose $\bp$ is \emph{not} a uniform distribution.  Then we can fix $i,j$ with
$p_i > p_j > 0$.  From \eqref{eq ratio}, we get
$$
\frac{\p(Y=i)}{p_i^2} < \frac{\p(Y=j)}{p_j^2}, 
$$
and dividing by $f_2$ to relate with \eqref{def M1}, and rearranging, 
\begin{equation}\label{ratio XY}
\frac{\p(X=i)}{\p(X=j)} > \frac{\p(Y=i)}{\p(Y=j)},
\end{equation}
which implies that $X$ and $Y$ have different distributions.  
\end{proof}

Theorem \ref{uniform thm} gives a complete answer to our first question:  when are the two pair-picking methods the same?  Next we turn to the second question:  when the two methods are different, how different can they be?

\section{Total variation distance}
\label{sect dtv}
%  DeSalvo 8-8-12
%We want to quantify: given a probability distribution $\bp$, how far apart are the two distributions on the color of a pair of socks, with the matching pair chosen by Method 1 or Method 2.  
% replaced with
We wish to quantify: given a probability distribution $\bp$, with the matching pair chosen by Method 1 or Method 2, how far apart are the two distributions with respect to the color of the matching pair?
% ---------  Reason is because this is a question, and the phrasing before added a statement to the end of a question so the question mark looked funny.  Now the inflection going up at the end matches the question mark.

A metric on the space of all probability measures is the
\emph{total variation distance}.  
\begin{defn}\label{define dtv}
For two real-valued random variables $X$ and $Y$, the total variation distance between the \emph{laws} of $X$ and $Y$ is defined as
$$\dtv(\mathcal{L}(X),\mathcal{L}(Y)) = \sup_{A\subseteq \R} |P(X\in A) - P(Y\in A)|,$$
where the sup is taken over all Borel sets $A\subseteq \R$.  When there is no confusion, we write $\dtv(X,Y)$ instead of $\dtv(\mathcal{L}(X),\mathcal{L}(Y)).$
\end{defn}
%It is common to write $\dtv(X,Y)$ instead of $\dtv(\mathcal{L}(X),\mathcal{L}(Y)).$     
This choice of definition is useful for
probability, with the desirable property that  $\dtv(X,Y) \le 1$, and it equals $\sup_{f: \mathbb{R} \to [0,1]} | \e f(X) - \e f(Y)|$.\footnote{But there is an alternate tradition, from analysis, to define the total variation distance between measures $\mu,\nu$ as $\sup_{f: \mathbb{R} \to [-1,1]} | 
 \int f d\mu  - \int f d\nu |$, which, when applied to $\mu = \mathcal{L}(X), \nu = \mathcal{L}(Y)$, gives values ranging from 0 to 2.}
 
%Some elementary facts about total variation distance:  
  % Note also $0\leq \dtv(X,Y) \leq 1$.  
 When $X$ and $Y$ are discrete random variables,  an equivalent definition is 
\begin{equation}\label{def dtv}
\dtv(X,Y) = \frac{1}{2} \sum_k |\p(X=k)-\p(Y=k)|.
\end{equation}
%Note that without the absolute value , the sum is identically 0.  
Furthermore, since 
$\sum_k \p(X=k) = \sum_k \p(Y=k)$, we can divide the summands into positive and negative parts to obtain two more equivalent definitions.\footnote{Notation:  $t^+=\max(0,t),t^-=\max(0,-t)$; hence $|t|=t^++t^-$ and $t=t^+-t^-$.}  % follows fromthat by splitting up the summands into positive and negative parts,
\begin{lemma}
\label{lemma 1}
\begin{eqnarray}
\label{positive part} \dtv(X,Y) &=& \sum_k (\p(X=k)-\p(Y=k))^+ \\
\nonumber & = & \sum_k (\p(X=k)-\p(Y=k))^-.
\end{eqnarray}
\end{lemma}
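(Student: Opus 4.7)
The plan is to derive the two claimed equalities directly from \eqref{def dtv} by exploiting the decomposition $|t| = t^+ + t^-$ together with the identity $t = t^+ - t^-$, applied termwise to the signed numbers $a_k := \p(X=k) - \p(Y=k)$.

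First, I would observe that since both $\mathcal{L}(X)$ and $\mathcal{L}(Y)$ are probability distributions on the same countable set, $\sum_k a_k = \sum_k \p(X=k) - \sum_k \p(Y=k) = 1 - 1 = 0$. Rewriting this sum using $a_k = a_k^+ - a_k^-$ gives
\begin{equation*}
\sum_k a_k^+ \;=\; \sum_k a_k^-.
\end{equation*}
(Both series are nonnegative, and they are both finite because $\sum_k |a_k| \le \sum_k \p(X=k) + \sum_k \p(Y=k) = 2$.)

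Next, combining $|a_k| = a_k^+ + a_k^-$ with the equality just obtained yields
\begin{equation*}
\sum_k |a_k| \;=\; \sum_k a_k^+ + \sum_k a_k^- \;=\; 2 \sum_k a_k^+ \;=\; 2 \sum_k a_k^-.
\end{equation*}
Dividing by $2$ and substituting into \eqref{def dtv} gives both claimed representations simultaneously.

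There is no real obstacle here; the only point that needs a line of care is the interchange of summation implicit in splitting $\sum_k a_k$ into its positive and negative parts, which is justified by the absolute convergence noted above. I would present the argument in four lines: define $a_k$, note $\sum a_k = 0$, deduce $\sum a_k^+ = \sum a_k^-$, and conclude via $|a_k| = a_k^+ + a_k^-$.
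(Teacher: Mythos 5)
Your argument is correct and is precisely the route the paper takes: it notes that $\sum_k \p(X=k) = \sum_k \p(Y=k)$ and splits the summands into positive and negative parts, exactly your chain $\sum_k a_k = 0 \Rightarrow \sum_k a_k^+ = \sum_k a_k^- \Rightarrow \sum_k |a_k| = 2\sum_k a_k^+$. Your added remark on absolute convergence is a harmless (and correct) extra precaution that the paper leaves implicit.
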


%For example, 
For example, when $X$ is  a Bernoulli random variable with  parameter $\theta$,\footnote{so that $\p(X=1)=\theta= 1-\p(X=0)$} and $Y$ is Bernoulli with parameter $\theta'$, the total variation distance is  $|\theta-\theta'|$.

Since our sample space is discrete, and the labels of the socks have no intrinsic meaning, it does not make sense to consider metrics such as Wasserstein distance, which assigns a metric on the sample space.  A popular alternative is the Kullbach-Liebler divergence, or relative entropy, which has the undesirable property of being asymmetric.  While in many circumstances total variation distance is too strong, we find it here

\begin{defn}\label{define D}
Given a discrete probability distribution $\bp$, let $X  $ have the Method 1 distribution given by 
\eqref{def M1}, let $Y  $ have the Method 2 distribution given by \eqref{def M2}, and define the 
\emph{discrepancy} of $\bp$ by
\begin{equation}\label{def D}
\DD = \dtv(X(\bp),Y(\bp)).
\end{equation}
\end{defn}
We could have written $\DD=\dtv(X,Y)$ above, but we prefered $ \dtv(X(\bp),Y(\bp))$, to emphasize that $\DD$ is the total variation distance between two probability laws, with each law being a function of a third underlying law $\bp$.
\section{Special Cases}

\subsection{Dimension $n=1$:  two colors of socks}\label{sect 2 colors}

In the case $n=1$, we write $\bp = (p_0,p_1)=(x,1-x)$. %$p_1=p$ and $p_0=1-p$.  %, and by symmetry, it suffices to consider $1/2 \leq p \leq 1$.  %, where $1/2 \leq p\leq 1$.  
The discrepancy $\DD = \dtv(X,Y)$ simplifies, via Lemma \ref{lemma 1}, to $|d_1|$, where 
\begin{eqnarray*}
d_1(x) = \p(X=0)-\p(Y=0) & = & \frac{x^2}{x^2+(1-x)^2} - (x^2+2(1-x)x^2) .
%N_2  := \p(X=0)-\p(Y=0)& = & \frac{(1-p)^2}{p^2+(1-p)^2} - ((1-p)^2+2p(1-p)^2).
\end{eqnarray*}
%By Lemma \ref{lemma 1}, $\dtv(X,Y) = \left| d_1 \right|$.
The expression $|d_1(x)|$ is plotted in Figure \ref{fig p}.  %By symmetry, $d_1(x) = - d_1(1-x)>0$ for $0\leq x \leq \frac{1}{2}$.%, with $d_1(x)>0$.  %so it suffices to consider $d_1(x)$, which is rational.
%The maximum value of $\dtv$ is then the maximum value of $d_1$ over all $1/2\leq p \leq 1$.  

Since $d_1$ is a rational function in one variable, it is easily optimized over $x\in [0,1]$. %we can easily take the derivative and find the critical numbers.  
We outline our procedure as a preparation for the more difficult case in Section \ref{sect 3 colors}.  We first put the derivative %is put 
over a common denominator, which is strictly positive for $0\leq x \leq 1$, and focus our attention on the numerator.  The numerator is a sixth degree polynomial in $x$ of the form $4 \left(-x+7 x^2-18 x^3+24 x^4-18 x^5+6 x^6\right)$, having four real roots: $0$, $1$, 
\begin{equation}\label{pmax}
x_1 := \frac{1}{6} \left(3 + \sqrt{3 \left(-3+2 \sqrt{3}\right)}\right) \doteq 0.696660, % 0.69665994659516431958,
\end{equation}
and the conjugate, $1-x_1$. The list of roots already includes both endpoints of the domain $[0,1]$.  The cusp for $|d_1(x)|$ at $x=1/2$ is also critical, with $|d_1(1/2) = 0|$ corresponding to the uniform case.  
Evaluating $|d_1(x)|$ at these five critical numbers exhausts all possible extremes, and the maximum value is 
$d_1(x_1)= 
%Since $p=0$ and the `$-$' solution of Equation \ref{pmax} are outside of the domain, we need not consider $\dtv$ evaluated at these points.  After testing the other critical numbers and the endpoint $p=1/2$, we conclude that the maximum value of $\dtv$ is given by $d_1$ evaluated at the value of $p$ given by the `+' solution to Equation \ref{pmax}, which gives $\max_{p} \dtv = 
 \frac{1}{\sqrt{135+78 \sqrt{3}}} \doteq 0.0608468$.
% More decimals 0.060846799231813547761

\begin{figure}[ht]
\includegraphics{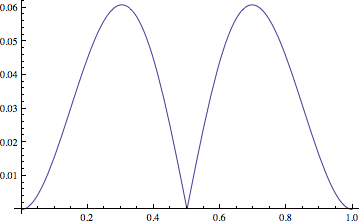}
\caption{Plot of $\DD$ for $\bp=(x,1-x)$, as a function of $x \in [0,1]$.\label{fig p}}
\end{figure}

%\clearpage

\subsection{Dimension $n=2$:  three colors of socks}\label{sect 3 colors}

\ignore{
\begin{figure}
\includegraphics[scale=0.6]{n2Plot1}
%\end{figure}
%\begin{figure}
%\label{fig ab2}
\includegraphics[scale=0.6]{n2Plot2}
\caption{Two different views of the plot of $\dtv$ over $\mathcal{R}$ in the case $n=2$.\label{fig ab}
}
\end{figure}
}

The case $n=2$ can be set up similarly to $n=1$, but now we have three cases of possible signs underlying absolute values.   Each case is a smooth, two-dimensional surface, and we find extremes by checking all critical values arising from points where the gradient vanishes, and on the boundary.  To avoid subscripts, we switch notation from $\bp=(p_0,p_1,p_2)$ to $\bp=(a,b,c)$, and define
$$
f(a,b,c):= a^2 (1 + 2(b + c) + 6bc),
$$ 
$$
T(a,b,c)= \frac{a^2}{a^2+b^2+c^2} - f(a,b,c),
$$
so that when $\bp=(a,b,c)$, with $a$ being the probability that a single sock has  color 0,
$T(a,b,c)=\p(X=0)-\p(Y=0)$.  Note that $T(a,b,c)=T(a,c,b)$.  
Exchanging the roles among colors 0, 1, 2,  we have $T(b,a,c)=\p(X=1)-\p(Y=1)$
 and $T(c,a,b)=\p(X=2)-\p(Y=2)$.
%Combining \eqref{def M1}, \eqref{def M2}, and \eqref{def dtv}, when socks have three possible colors, with probabilities given by $\bp=(a,b,c)$, the discrepancy between the two methods of selecting a pair is given by
From 
Definitions \ref{define dtv} and \ref{define D},
%Definition \ref{def D} and Lemma \ref{lemma 1}, 
when $\bp=(a,b,c)$,  
$$
2 \DD =| T(a,b,c)| + |T(b,a,c)|+|T(c,a,b)|.
$$
 
The expression above has the form $|T_1|+|T_2|+|T_3|$, and  the absolute value function is an obstacle to taking the gradient.  But by taking the eight cases for the sign, each of the expressions $\pm T_1 \pm T_2 \pm T_3$ is a %smooth
rational  function. %, that yields to calculus methods. 

A straightforward parameterization of the two-dimensional set of probabilities $(a,b,c)$ would have
$a \ge b \ge 1-a-b \ge 0$, implying that $T_1 \ge 0$ and $T_3 \le 0$, so that there are only two cases, according to  the sign of $T_2$.  A major obstacle to this approach is the boundary, which is complicated, so instead we parameterize in terms of $(x,y) \in [0,1]^2$ as follows:
$$  \bp(x,y) = (a,b,c) \ \mbox{where } 
t=1+x+y, a=\frac{1}{t},b=\frac{x}{t},c=\frac{y}{t}.$$   
Now taking $a=a(x,y)$ and so on, we have three functions defined on $[0,1]^2$,
\begin{eqnarray}
\nonumber T_1(x,y)  & := &  T(a,b,c), \\
\nonumber T_2(x,y)  & := &  T(b,a,c), \\
\nonumber T_3(x,y)  & := &  T(c,a,b).
\end{eqnarray}

The total variation distance is 
given by % one-half of
\begin{equation}\label{dtv n=2}
2\dtv(X,Y) = |T_1(x,y)|+|T_2(x,y)|+|T_3(x,y)|.
\end{equation}
Since $1 \ge x,y$, we have $a \ge b,c$ and 
since the largest mass is at 1, we know that for all $x,y\in [0,1]$, $T_1(x,y) \ge 0$. 

We can eliminate the case $T_1 \ge 0, T_2 \ge 0$ and $T_3 \ge 0$, as this implies $ T_1=T_2+T_3=0$ since $T_1 + T_2 + T_3=0$.   By Lemma \ref{lemma 1} this case gives $\DD=0$, not of interest in the search for the maximum value.
There are three remaining cases of sign to consider.  Let
\begin{eqnarray*}
d_1(x,y) & = & T_1(x,y) + T_2(x,y) -T_3(x,y), \\
d_2(x,y) & = & T_1(x,y) - T_2(x,y) +T_3(x,y), \\
d_3(x,y) & = & T_1(x,y) - T_2(x,y) -T_3(x,y). 
\end{eqnarray*}
%Then $\max \dtv(X,Y) = \max(d_1,d_2,d_3)$, and so it suffices to check the maximum values of each of these functions, which are not just rational functions in $x$ and $y$, but rational functions whose denominators are always strictly positive when $x,y\geq 0$.
Then $\max \dtv(X,Y) = \max(d_1,d_2,d_3)$, and so it suffices to check
the maximum values of each of these rational functions.

Let us consider $g(x,y) := d_1(x,y)$.\footnote{The term $d_2$  becomes $d_1$ under the interchange of $x$ and $y$, so no further work is required for $d_2$.  For $d_3$, the corresponding $h_x$ and $h_y$, after
cancellation of a common factor, have total degree 6 each, and one must account for the 36 solutions guaranteed by Bezout's Theorem.}
  Since $g$ is a rational
function in two variables, it is elementary to calculate the partial derivatives with
respect to $x$ and $y$, denoted $g_x$ and $g_y$, respectively.  What
is \emph{not} so elementary is finding all solutions $(x,y)$ to the
system $g_x(x,y) = g_y(x,y)=0$.  This set, $V(g_x,g_y) := \{(x,y): g_x=g_y=0\}$,
also known as the affine \emph{variety} defined by $g_x, g_y$, is what we
wish to find; a good introductory text on this subject is \cite{IVA}.

Continuing with this example, even though $g_x$ and $g_y$ are rational functions, when each is rationalized it is clear that for $x,y\geq 0$ the denominator is always positive, and hence plays no role in characterizing the set of points in the variety $V(g_x,g_y) \cap [0,1]^2$.  Thus we may simply find the variety of the numerators restricted to $[0,1]^2$, denoted $h_x$ and $h_y$, respectively, which are bivariate polynomials.

  A generalization to the Theorem of Algebra due to Bezout (see for example Chapter 5, Section 7 of \cite{IVA}) can be used to verify that all  solutions have been found\footnote{The precise form of the theorem requires several definitions and is not intended to be the focus; instead, we merely require assurance that the solutions found by Mathematica\textsuperscript{\textregistered} \cite{Mathematica} are exhaustive, since they are easily verified.%  Of course, when $h_x$ and $h_y$ share common factors there will be an infinite number of solutions, but once those are cancelled out we are left with a finite number of points.}.
  }.
  %For a given system of multivariate polynomials, Mathematica guarantees that it will find all solutions or give a warning saying otherwise.   
%   of total degree 7
%   $7 \times 7 = 49$
In this case, after dividing out by a common factor of $x$, %(which covers solutions of the form $x=0$, $y$ can be anything), 
the two polynomials each have total degree 7.  Bezout's theorem guarantees $7\times 7 = 49$ solutions total including multiplicities, but some of these are solutions ``at infinity."\footnote{Here is a simple analogy: How many times will a parabola intersect a line?  A parabola has degree 2 and a line has degree 1.  Suppose our parabola is $y=x^2$: then if our line is 1) $y=x-1$, then there will be no intersections; 2) $y=0$, then there is one intersection of multiplicity 2;  3) $y=x$, then there are two unique intersections of multiplicity 1 each; 4) $x=a$, for any real $a$, then there is one intersection of multiplicity 1.  By using an appropriate transformation into the projective plane, one can guarantee exactly two solutions in all cases.}  Mathematica\textsuperscript{\textregistered} finds a set of 19 unique, easily-verified solutions; when including multiplicities, this accounts for 39 of the total solutions.  
%The remaining 10 solutions are indeed ``at infinity," but we do not delve into this further and instead invite the reader to work out the detailed calculations.
%By hand, we can find 10 solutions at infinity, so all 49 solutions are accounted for.
By hand we can find 10 solutions at infinity, so all 49 solutions have been addressed. %are accounted for.

\ignore{
To find the maximum of each $d_i(x,y)$, $i=1,2,3$, we need to find $\{ (x,y)\in [0,1]: \nabla d_i(x,y)=0\}$.  In Mathematica, the command 
$$
{\tt \text{Solve}[\{D[\text{d1}[x,y],x],D[\text{d1}[x,y],y]\}==0,\{x,y\}]}
$$
will attempt to find all solutions to this set of equations, and if it cannot certify that it has found \emph{all} solutions it will produce a warning.  
%, which it does if the functions are defined as they are above.    
Rather than input the system above directly into Mathematica\textsuperscript{\textregistered}, it is better to first rationalize the expression by applying {\tt Together}, followed by {\tt Numerator}, which returns the numerator of the rationalized expression.  A final step is to compute the greatest common divisor, using {\tt PolynomialGCD}, check for its solutions, and then divide each equation by this expression to obtain an irreducible system\footnote{This is an important step as the GCD may have an infinite number of solutions which are more easily handled by hand, and may cause Mathematica\textsuperscript{\textregistered} to give a warning.  By dividing through by this common divisor, we are left with an irreducible set of equations with a provably finite number of roots, which will yield to standard techniques in the case of polynomials}.  A good introductory reference for the study of polynomials is (IDEALS, VARIETIES, and ALGORITHMS).
} % end ignore

%and then multiply through by the denominators, which are always strictly positive.  
%Once we have these simplified expressions, we can then do one final step by dividing through by the greatest common divisor of the two expressions.

% is the same as  used found the set of $(x,y)$ such that the solutions to gradient is 0.
%took derivatives with respect to $x$ and $y$ and solved for when the system was simultaneously 0.  

% dtv = 0.08429419234614604446250630128463933727501473480639931042529739419880\
% 4015476303480007322587184827953

%Among all solutions found, we are only interested in those that lie in $[0,1]^2$, leaving just a few solutions to check.  %Among the solutions found, 
We obtain the largest value %$\dtv$ 
of $\dtv$
%, approximately 0.0842942, 
from the point $(x,y)$ 
% \doteq (.35909, .35909)$.  The %Their 
%exact values
given by\footnote{The Mathematica\textsuperscript{\textregistered} expressions are
$$ x=\text{Root}\left[1+4 \text{$\#$1}-14 \text{$\#$1}^2-4 \text{$\#$1}^3-34 \text{$\#$1}^4+20 \text{$\#$1}^5\ \&,2\right],$$
$$ \dtv =\frac{1}2 \text{Root}[32000+168192 \text{$\#$1}-4557600 \text{$\#$1}^2+14567472 \text{$\#$1}^3$$
$$-821583 \text{$\#$1}^4+314928 \text{$\#$1}^5\&,2].$$
}
% are
\begin{align}
\nonumber x \in (0,1): & \ \ 1+4x-14x^2-4x^3-34x^4+20x^5 = 0, \\ % {\tt Root}[1 + 4\#1 - 14 \#1^2 - 4 \#1^3 - 34 \#1^4 + 20 \#1^5\ \&, 2] \\
\nonumber y: & \ \ y=x, \\
\label{DD val}
\begin{split}
  2\dtv =z \in (0,0.2) : & \ \ \ \  32000 + 168192 z \\ 
  &- 4557600 z^2 + 14567472 z^3 \\
  &- 821583 z^4 + 314928 z^5 = 0. %\dtv & = & {\tt Root}[32000 + 168192 \#1 - 4557600 \#1^2 + 14567472 \#1^3 \\ 
%       & &\ \ \ \ \ \ \   - 821583 \#1^4 + 314928 \#1^5\ \&, 2],
\end{split}
\end{align}
%Thus, our unique maximum value $\DD = 0.0842942$ is produced by $\bp = (0.582011, 0.208994, 0.208994)$, which is of the form
This solution is of the form
$$
\bp=\left(x_2,\frac{1-x_2}{2},\frac{1-x_2}{2} \right)
$$
for the value of $x_2\in [0.5,0.6]$ that solves $-5+42 x_2-114 x_2^2+168 x_2^3-153 x_2^4+54 x_2^5 =0$, with
\begin{equation}\label{def x2}
x_2 \doteq 0.582011, \ \ \ \DD \doteq 0.0842942;
\end{equation}
the exact value of $\DD$ given by Equation \eqref{DD val}.
%exactly $\doteq$  decimal and $\DD  =$ exactly $\doteq$  decimal.

\ignore{
\subsection{The cases $n=3,\ldots,7$:  two-parameter families}\label{sect 4 colors}

The optimization method of Section \ref{sect 3 colors} applied to four colors has the parameterization
$$  \bp(x,y,z) = (a,b,c,d) \ \mbox{where } 
t=1+x+y+z, a=\frac{1}{t},b=\frac{x}{t},c=\frac{y}{t},d=\frac{z}{t}.$$ 
Unfortunately, the memory requirement for Mathematica\textsuperscript{\textregistered}'s algorithm exceeds our available resources\footnote{On a standard desktop PC with 5 GB of RAM, the calculation was aborted after about two days of computing; all of the RAM had been used up and it still hadn't finished with the first form of the equation $T1-T2-T3-T4$.  
%but Mathematica is unable to perform the required calculations easily; 
%this is not necessarily due to the implementation, 
A survey on computational complexity in algebraic geometry can be found in \cite{AlgGeomComplexity}.}.

The solutions to less general cases, of the form $t\,\bp = (1,1,1,x), (1,1,x,x), $ $(1,x,x,x), (1,1,x,y), (1,x,x,y)$ were computed, and among the solutions the maximum value of $\DD$ was of the form 
$$
\bp=\left(x_3,\frac{1-x_3}{3},\frac{1-x_3}{3},\frac{1-x_3}{3} \right)
$$
for $x_3 \doteq 0.516003$, $\DD \doteq 0.097663$.  More precisely,
\begin{align*}
x_3 = x\in[0,1]: & 1+12 x+21 x^2-144 x^3-117 x^4-540 x^5+351 x^6=0\\
d_{TV}=z\in[0,1]: & -371293-8424312 z+400465536z^2-4466117376 z^3 \\
   & +17129797632 z^4-2807709696 z^5+1382400000 z^6=0.
\end{align*}

Continuing with all possible two-parameter families for $n=4, 5, 6, 7$, we obtained maximum values, and observed that the maxima always fit into the one parameter family given by  Equation \eqref{one parameter}.  A table of approximate values is given in Section \ref{sect conj}.\footnote{Exact values were obtained within the one-parameter family using Mathematica\textsuperscript{\textregistered}, but the form of the exact output is cryptic and the numerical approximations reveal a more discernible pattern.}

An introductory text on the numerical solution of a system of polynomial equations is \cite{Sommese}, and there are a number of computer packages available that implement the algorithms.  One such program, Bertini, uses homotopy continuation methods to numerically approximate the solutions.   The polynomial equations for the three-parameter family given by $\bp = (1,x,y,z)$ were run using this software, but the output reported high condition numbers, which brought the accuracy into question.  Alternative parameterizations such as $(1, 1+x^2, 1+x^2+y^2, 1+x^2+y^2+z^2)$
yielded similar condition numbers.
%\footnote{We thank Steven Idhe for helpful information regarding the software package Bertini.}  
%Other techniques such as preconditioning may yield more stable numerical results, but we do not pursue this further.\footnote{We thank Steven Idhe for helpful information regarding the software package Bertini and his work on pre-conditioning.}
}

\section{Conjectures about the largest possible discrepancy}\label{sect conj}

The weakest conjecture is that there is some nontrivial upper bound on discrepancy.  Formally,
we define the universal constant for the pair discrepancy by 
\begin{equation}\label{def universal}
  \ell_0 := \sup_\bp \DD,
\end{equation}
where the supremum is over all 
distributions $\bp$ on a finite or countable set of colors.  Since total variation distance is always less than or equal to 1, trivially $\ell_0 \le 1$, and the conjecture is

\begin{conjecture}\label{weakest conjecture}
The constant defined by \eqref{def universal} is \emph{strictly} less than 1, i.e.,
\begin{equation}\label{weak equation}
  \ell_0 < 1.
\end{equation}
\end{conjecture}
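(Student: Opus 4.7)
The plan is to prove $\ell_0 < 1$ by establishing a uniform positive lower bound on
$$u_i := f_2 R_i = \p(Y=i)/\p(X=i),$$
where $R_i := \sum_{k \geq 0}(k+1)!\,e_k(\bp_{-i})$ and $e_k(\bp_{-i})$ is the $k$-th elementary symmetric polynomial in the entries of $\bp$ other than $p_i$. Since $\sum_i p_i^2 R_i = 1$, we have $\e_X[u_i] = 1$, so
$$2\DD \;=\; \e_X|u_i - 1| \;=\; 2\,\e_X[(1-u_i)^+],$$
and a uniform bound $u_i \geq c_0 > 0$ immediately yields $\DD \leq 1 - c_0$.

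By Lemma \ref{ratio lemma}, $R_i$ is nonincreasing in $p_i$, so $\min_i u_i$ is attained at an index $i^*$ with $p_{i^*} = \max_j p_j$. Using $(k+1)! = \int_0^\infty t^{k+1} e^{-t}\,dt$ and exchanging sum and integral,
$$R_{i^*} \;=\; \int_0^\infty t\,e^{-t}\prod_{j \neq i^*}(1 + p_j t)\,dt.$$
I would then apply the elementary inequality $\log(1+x) \geq x - x^2/2$ (valid for $x \geq 0$) coordinate-wise to the product, substitute into the integral, cancel the $e^{-t}$, and change variables $s = t\sqrt{f_2 - p_{i^*}^2}$ to obtain
$$u_{i^*} \;\geq\; h(r), \qquad h(r) \,:=\, \frac{1}{1-r}\int_0^\infty s\,\exp\!\bigl(-\sqrt{r/(1-r)}\,s - s^2/2\bigr)\,ds,$$
where $r := p_{i^*}^2/f_2 \in [0,1]$. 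The degenerate endpoint $r = 1$ forces $\bp$ to be a point mass, in which case $\DD = 0$ trivially.

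The inner integral evaluates in closed form in terms of the standard normal tail $Q(a) := \p(Z > a)$, so $h$ is explicit. A short calculation shows $h$ is continuous on $[0, 1]$ with $h(0) = h(1^-) = 1$ and $h > 0$ throughout, so $c_0 := \min_{r \in [0, 1]} h(r)$ is strictly positive. Combining gives $\ell_0 \leq 1 - c_0 < 1$; numerically $c_0 \approx 0.68$, yielding the explicit (far from sharp) bound $\ell_0 \lesssim 0.32$.

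The only nontrivial step is verifying $\min_{[0,1]} h > 0$, which reduces to one-variable calculus on a compact interval. The sole subtle point is the limit $r \to 1^-$ (equivalently $a := \sqrt{r/(1-r)} \to \infty$), where $(1-r)^{-1} = 1 + a^2$ diverges while the bracket $1 - a\,e^{a^2/2}\sqrt{2\pi}\,Q(a) \sim 1/a^2$ vanishes; the standard asymptotic $Q(a) \sim \phi(a)/a$ for the normal tail shows that $h(r) \to 1$ in balance, so continuity extends to $[0,1]$ and the positive minimum exists.
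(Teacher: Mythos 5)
Your argument is correct, and the first thing to say is that the paper does not prove this statement at all: $\ell_0<1$ is left as Conjecture~\ref{weakest conjecture}, and the only route offered toward it (in the Discussion) is conditional on the unproven structural Conjecture~\ref{conjecture}, which would yield the sharp value $\ell_0=\sup_c\ell(c)\doteq 0.1832$. Your proof is unconditional. The chain of reductions checks out: $u_i=\p(Y=i)/\p(X=i)=f_2R_i$ satisfies $\e_X[u_i]=\sum_i p_i^2R_i=\sum_i\p(Y=i)=1$, so by Lemma~\ref{lemma 1}, $\DD=\e_X[(1-u_i)^+]\le 1-c_0$ once every $u_i\ge c_0$; Lemma~\ref{ratio lemma} places $\min_i u_i$ at the most probable color $i^*$; the identity $\sum_k e_k(\bp_{-i^*})t^k=\prod_{j\ne i^*}(1+p_jt)$ together with $(k+1)!=\int_0^\infty t^{k+1}e^{-t}\,dt$ and Tonelli gives the integral form of $R_{i^*}$; and $\log(1+x)\ge x-x^2/2$ applied with $\sum_{j\ne i^*}p_j=1-p_{i^*}$ and $\sum_{j\ne i^*}p_j^2=f_2-p_{i^*}^2$ yields exactly your $h(r)$ after rescaling. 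The closed form $h(r)=(1+a^2)\bigl(1-a\,e^{a^2/2}\sqrt{2\pi}\,Q(a)\bigr)$ with $a=\sqrt{r/(1-r)}$ and $Q$ the standard normal tail is right, and the two-sided bounds $\phi(a)(a^{-1}-a^{-3})<Q(a)<\phi(a)(a^{-1}-a^{-3}+3a^{-5})$ make the limit $h(r)\to1$ as $r\to1^-$ rigorous (they give $h>1-2a^{-2}-3a^{-4}>0$ for $a\ge2$, while continuity and strict positivity handle the compact remainder); numerically $\min h\approx0.681$ near $a\approx0.8$, so $\ell_0\le 1-\min h\approx0.319$. What your route buys is a theorem where the paper has only a conjecture, at the price of a far-from-sharp constant; it is a pleasant consistency check that your lower bound on $u_{i^*}$ is asymptotically exact on the family $\bp(n,c/\sqrt n)$, where the neglected cubic terms vanish and one recovers $\ell(c)=r(1-h(r))$ with $r=c^2/(1+c^2)$ --- the remaining slack comes only from replacing $\e_X[(1-u_i)^+]$ by its worst single term. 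The one step you should write out in full is the verification that $\min_{[0,1]}h>0$, but as sketched above this is routine, and I see no gap.
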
 

\subsection{Conjectures for a finite number of colors}

If there are a finite number of colors, say $n+1$ with $n \ge 0$, then we can relabel the colors as $0,1,\ldots,n$ so that $\bp=(p_0,\ldots,p_n)$ with
\begin{equation}\label{sorted}
%\bp=(p_0,\ldots,p_n) \mbox{ with }
%p_0 \ge p_1 \ge \cdots \ge p_n >0,  \  p_0+p_1+\cdots+p_n=1.
p_0 \ge p_1 \ge \cdots \ge p_n  \ge 0,  \  p_0+p_1+\cdots+p_n=1.
\end{equation}
Given $n>0$, and $x \in [\frac{1}{n+1},1)$, let
\begin{equation}\label{one parameter}
\bpnx = \left(x,\frac{1-x}{n},\ldots,\frac{1-x}{n} \right),
\end{equation}
which, due to $x \in [\frac{1}{n+1},1)$, satisfies \eqref{sorted}.

With the notation \eqref{one parameter}, % this notation, 
the result of Section \ref{sect 3 colors} may be summarized as:  for $n=2$, over all probability distributions on $n+1$
 colors standardized to satisfy \eqref{sorted}, the maximum value of $\DD$ is achieved, uniquely, 
 at $\bp =% \bpnx$,
 \bp(2,x)$,  
 with $x =x_2$ as specified by \eqref{def x2}.

For each $n>0$, \eqref{one parameter} defines a \emph{one parameter family} of probability distributions.
%
%  NOTE TO authors:  yes, for EACH n there is a one parameter family;  hence we have families.
%
At the endpoint $x=1/(n+1)$, $\bpnx$ is a uniform distribution.  Now suppose that $x \in (1/(n+1),1)$, so that $\bpnx$ has $p_0 > p_1 = p_2 = \cdots = p_n>0$.  It is obvious from \eqref{def M1} that \mbox{$\p(X=0)>\p(X=1)=\cdots = \p(X=n)>0$,} and 
Lemma \ref{ratio lemma} implies 
%  WARNING:  check this claimed implication
that \mbox{$\p(Y=0)>$} $\p(Y=1)=\cdots = \p(Y=n)>0$.  That is, both $X$ and $Y$ have distributions in the same one parameter family.  Finally, \eqref{ratio XY} implies that $\p(X=0)>\p(Y=0)$, while for $i=1$ to $n$, $\p(X=i)<\p(Y=i)$, and hence using \eqref{positive part}, for each $n>0$ and $x \in (\frac{1}{n+1},1)$,  $\bp=\bp(n,x)$ has the simplified expression for its discrepancy,
\begin{align}
\nonumber \DD & = \p(X=0)-\p(Y=0) \\
 & = \frac{x^2}{x^2+\frac{(1-x)^2}{n}} - x^2 \sum_{k=0}^n (k+1)! \binom{n}{k} \left(\frac{1-x}{n}\right)^k. \label{was19}
% \nonumber \DD & = \p(X=0)-\p(Y=0) \\
% & = \frac{x^2}{x^2+\frac{(1-x)^2}{n^2}} - x^2 \sum_{k=0}^n (k+1)! \binom{n}{k} \left(\frac{1-x}{n}\right)^k.
%  OLD value, with the missing factor of n on the bottom of the first fraction --- since the FullSimplify result below
%  MAY HAVE used this incorrect expression
\end{align}
% ------------- Note Mathematica's FullSimplify reduces this to
% x^2 \frac{n^2 \left(1+\frac{e^{-\frac{n}{-1+x}} (-1+x+n x) \text{ExpIntegralE}\left[-1-n,-\frac{n}{-1+x}\right]}{-1+x}\right)}{(1+n) (-1+x)^2}
% -------------

%for $\bp$ in our one parameter family.
%More --- explicit simplification, as a function of n
%More, pretty plot, pretty please. 

\begin{figure}
\includegraphics[scale=0.6]{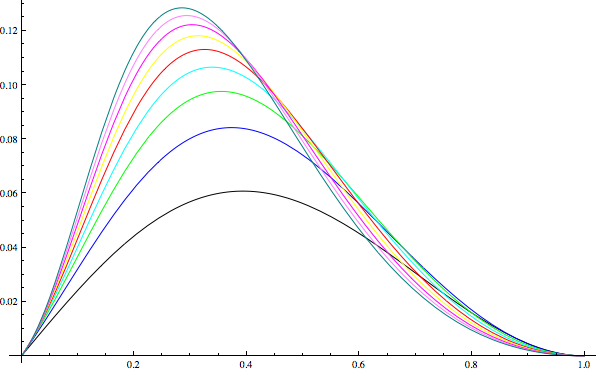}
%\end{figure}
%\begin{figure}
%\label{fig ab2}
 \caption{$\DD$ for the one parameter families \eqref{one parameter}, $n=1$ to 9.
 %, scaled so that $x_n\in [\frac{1}{n+1},1]$ is linearly transformed into $[0,1]$.}
For each $n$, we plot $\frac{n+1}n x - \frac{1}n$ versus $D(\bp(x,n))$, so that all 9 graphs have domain [0,1].
\label{fig one parameter}}
\end{figure}

\begin{conjecture}\label{conjecture}
For every nonnegative integer $n$, among all probability distributions on $n+1$ colors, the maximum value of $\DD$ is achieved by a distribution of the form $\bpnxn$.
\end{conjecture}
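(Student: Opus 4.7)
The plan is to prove Conjecture \ref{conjecture} by induction on $n$, combining a Lagrange-multiplier analysis that exploits the symmetric-function structure of \eqref{def M1}--\eqref{def M2} with a monotonicity statement for the one-parameter maxima
$$M_n := \sup_{x \in [1/(n+1),\,1)} D(\bpnx).$$

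Since $\DD$ is continuous on the compact simplex of probability distributions on $n+1$ colors, the supremum is attained at some maximizer $\bp^*$, which after relabeling satisfies $p_0^* \ge p_1^* \ge \cdots \ge p_n^*$, and by Theorem \ref{uniform thm} is non-uniform. Lemma \ref{ratio lemma} shows that $\p(Y=i)/p_i^2$ is non-increasing in $p_i$ while $\p(X=i)/p_i^2 = 1/f_2$ is constant, so the positive set $S^+ := \{i : \p(X=i) > \p(Y=i)\}$ is an initial segment $\{0, 1, \ldots, k\}$ of the sorted indices. The target configuration $\bpnxn$ has $S^+ = \{0\}$, so a first auxiliary step is to prove $|S^+| = 1$ at any maximizer, by a local perturbation that breaks any tie $p_0 = p_1$.

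Given $|S^+| = 1$, the discrepancy simplifies to $\DD = p_0^2 (1/f_2 - \sigma_0)$, where $\sigma_0 := \sum_{k} (k\!+\!1)!\, e_k(p_1, \ldots, p_n)$ and $e_k$ is the elementary symmetric polynomial. Writing $\bp_{-S}$ for the tuple $(p_1,\ldots,p_n)$ with the entries indexed by $S$ omitted, one uses $\partial e_k/\partial p_j = e_{k-1}(\bp_{-j})$ together with the identity
$$ e_m(\bp_{-i}) - e_m(\bp_{-j}) = (p_j - p_i)\, e_{m-1}(\bp_{-i,-j}). $$
The Lagrange condition $\partial \DD/\partial p_i = \partial \DD/\partial p_j$ for $i, j \in \{1,\ldots,n\}$ then reduces to either $p_i = p_j$ or
$$ \frac{2}{f_2^2} \;=\; \sum_{m \ge 1} (m\!+\!2)!\, e_{m-1}(\bp_{-i,-j}). $$
If three distinct values appeared in the tail at positions $i, j, k$, applying this relation to the pairs $(i,j)$ and $(i,k)$ and subtracting---with a second use of the identity applied to $e_{m-1}(\bp_{-i,-j}) - e_{m-1}(\bp_{-i,-k})$---yields $(p_k - p_j)\sum_{m \ge 2}(m\!+\!2)!\, e_{m-2}(\bp_{-i,-j,-k}) = 0$, a contradiction since the sum is strictly positive. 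Hence at most two distinct values occur among $\{p_1, \ldots, p_n\}$ at any interior critical point.

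To close the argument, the genuine two-value case must be ruled out and the boundary handled. If $p_1 = \cdots = p_m = \alpha > \beta = p_{m+1} = \cdots = p_n$ with $\beta > 0$, a second-variation analysis along the 2-dimensional slice fixing $p_0$ and $m\alpha + (n-m)\beta$ should show the Hessian of $\DD$ has a positive eigenvalue in a direction moving toward the equal tail. When $\beta = 0$ the distribution lives on at most $n$ colors, and the inductive hypothesis (base cases $n=1, 2$ from Sections \ref{sect 2 colors}--\ref{sect 3 colors}) together with the monotonicity $M_n > M_{n-1}$---strongly suggested by Figure \ref{fig one parameter} and expected to be tractable from the closed form \eqref{was19}---yields $\DD \le M_{n-1} < M_n$. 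The principal obstacle is expected to be the two-distinct-value case: unlike the three-or-more-values case, no symmetric-function identity alone forces a contradiction, so a direct and possibly delicate second-variation computation appears unavoidable.
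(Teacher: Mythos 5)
This statement is one the paper explicitly leaves open: the authors state that they cannot prove Conjecture~\ref{conjecture} and offer only heuristic evidence (the verified cases $n=0,1,2$, a symmetry analogy, and numerical sampling). Your proposal is therefore not being measured against an existing proof, and as written it does not supply one: it is a strategy with at least three essential steps left unestablished. First, the claim that $|S^+|=1$ at a maximizer is not proved, and the proposed mechanism (a perturbation breaking a tie $p_0=p_1$) would not suffice even in principle: by Lemma~\ref{ratio lemma} the set $S^+$ is an initial segment of the sorted indices, but it can contain several indices with \emph{distinct} probabilities, so there is no tie to break. Moreover, the reduction $\DD=p_0^2(1/f_2-\sigma_0)$ and the subsequent Lagrange analysis presuppose $|S^+|=1$; since $\DD$ is only piecewise smooth (the absolute values in the total variation distance create ridges where the sign pattern of $\p(X=i)-\p(Y=i)$ degenerates), a maximizer could sit on such a ridge, where first-order conditions for a single smooth branch are not valid. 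Second, you yourself flag the two-distinct-value tail as the ``principal obstacle'' and offer only the expectation that a second-variation computation ``should'' work; this is precisely the kind of step that separates a heuristic from a proof, and nothing in the symmetric-function machinery you set up addresses it. Third, the induction requires the strict monotonicity $M_n>M_{n-1}$ to dispose of boundary maximizers with $p_n=0$, and this is supported only by Figure~\ref{fig one parameter}; extracting it from \eqref{was19} is itself a nontrivial analytic problem (the location $x_n$ of the inner maximum has no closed form even for $n=2$, cf.\ \eqref{def x2}).

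The parts that are sound are worth noting: the compactness argument for existence of a maximizer, the observation via Lemma~\ref{ratio lemma} and Theorem~\ref{uniform thm} that $S^+$ is a nonempty initial segment at a non-uniform maximizer, and the symmetric-function identity $e_m(\bp_{-i})-e_m(\bp_{-j})=(p_j-p_i)\,e_{m-1}(\bp_{-i,-j})$, which does correctly reduce interior critical points (on the smooth branch with $|S^+|=1$) to tails with at most two distinct values. That reduction is a genuine structural insight and narrows the search considerably. But until the three gaps above are closed --- in particular the two-value case and the behaviour at nonsmooth points of $\DD$ --- the statement remains, as in the paper, a conjecture.
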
  

A slightly stronger conjecture is the following:

\begin{conjecture}\label{stronger conjecture}
For every nonnegative integer $n$, among all probability distributions on $n+1$ colors, the maximum value of $\DD$ is achieved 
\emph{uniquely} by $\bpnxn$, where $x_n = {\rm argmax}_x \ D(\bpnx)$.
\end{conjecture}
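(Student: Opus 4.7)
The plan is to prove Conjecture \ref{stronger conjecture} in three stages. By the permutation invariance of $\DD$, I may assume throughout that the maximizer $\bp^*$ is sorted as $p_0^* \geq p_1^* \geq \cdots \geq p_n^*$. In the first stage, I would show the optimum is interior, $p_n^* > 0$. If $p_n^* = 0$, then $\bp^*$ lies on an $n$-color face of the simplex, and introducing a tiny mass $\delta > 0$ at color $n$ (at the expense of $p_0$) should strictly increase $\DD$; this should follow from a direct computation of the one-sided derivative $\partial \DD / \partial p_n$ at the conjectured optimum on $n$ colors.

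The crucial second stage asserts that any interior maximizer satisfies $p_1^* = p_2^* = \cdots = p_n^*$. The natural first attempt is pairwise smoothing: replace any $p_i > p_j > 0$ with $i, j \geq 1$ by their common average, and hope that $\DD$ strictly increases. Using the integral representation
\[
\p(Y = 0) \;=\; p_0^2 \int_0^\infty t\, e^{-t} \prod_{k=1}^n (1 + t\, p_k)\, dt,
\]
obtained from \eqref{def M2} via the identity $(k+1)! = \int_0^\infty t^{k+1}e^{-t}\,dt$, one sees that smoothing simultaneously increases $\p(X = 0)$ and $\p(Y = 0)$, so the claim reduces (to first order) to an inequality comparing $2/\bigl(p_0^2 + \sum_k p_k^2\bigr)^2$ with $\int_0^\infty t^3 e^{-t} \prod_{k \neq i, j}(1 + t p_k)\,dt$. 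This inequality can actually fail at sufficiently peaked $\bp$ (e.g., near $\bp = (1,0,\ldots,0)$), so pairwise smoothing alone cannot settle the matter. Instead one must exploit that at an interior maximizer the Lagrange conditions $\partial \DD / \partial p_i = \lambda$ hold for every $i$, and then argue, using the monotone sign pattern furnished by Lemma \ref{ratio lemma} to eliminate the absolute values locally, that the only solutions of this polynomial system with $p_0 \geq p_1 \geq \cdots \geq p_n > 0$ have $p_1 = \cdots = p_n$.

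The final stage is a one-variable optimization on the family $\bpnx$ of \eqref{one parameter}. The discrepancy $D(\bpnx)$ is the smooth function \eqref{was19} on $[1/(n+1), 1)$, vanishing at $x = 1/(n+1)$ and tending to $0$ as $x \to 1$, so the supremum is attained at an interior critical point. Uniqueness of $x_n$ would follow by showing that $\frac{d}{dx} D(\bpnx)$ has a unique zero on the interval, in the spirit of the sixth-degree polynomial analysis carried out for $n = 1$ in Section \ref{sect 2 colors}.

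The principal obstacle is the second stage. Because pairwise smoothing can \emph{decrease} $\DD$, one cannot reduce to symmetric $\bp^*$ by any direct majorization argument, and an algebraic-geometric study of the variety cut out by the Lagrangian equations seems unavoidable. The paper's own treatment of $n = 2$, which already involved tracking $49$ roots counted with multiplicity, suggests that for general $n$ this variety becomes rapidly more complex, and a new structural insight — perhaps a global convexity property of some clever reparametrization of $\DD$, or an explicit identification of the real critical locus as the orbit of $\bpnxn$ under the symmetric group — will likely be needed to handle arbitrary $n$.
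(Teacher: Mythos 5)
You should first be aware that the statement you are trying to prove is presented in the paper as an open conjecture, not a theorem: the authors explicitly write that they cannot prove even the weaker Conjecture~\ref{conjecture}, and they offer only heuristic evidence (the verified cases $n=0,1,2$, an analogy with symmetric payoff functions, the appearance of the family \eqref{one parameter} in Fano's inequality, and Monte Carlo sampling for $n=3,\ldots,8$). So there is no proof in the paper to compare yours against, and your submission is, by your own account, a research plan rather than a proof. The gap is exactly where you locate it: stage two. You correctly observe that pairwise smoothing of $p_i,p_j$ for $i,j\ge 1$ need not increase $\DD$, so no majorization argument forces the maximizer into the one-parameter family \eqref{one parameter}; and the fallback you propose --- solving the Lagrange system $\partial\DD/\partial p_i=\lambda$ and showing its only sorted interior solutions have $p_1=\cdots=p_n$ --- is precisely the computation that the paper could carry out only for $n=2$, by tracking all $49$ roots of a pair of degree-$7$ polynomials, and which it reports as computationally infeasible already for $n=3$. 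Asserting that ``the only solutions of this polynomial system have $p_1=\cdots=p_n$'' is a restatement of the conjecture in Lagrangian form, not a proof of it.

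Two further points. Your integral representation
$\p(Y=0)=p_0^2\int_0^\infty t e^{-t}\prod_{k=1}^n(1+t p_k)\,dt$,
obtained from \eqref{def M2} via $(k+1)!=\int_0^\infty t^{k+1}e^{-t}\,dt$ and the generating function of the elementary symmetric polynomials, is correct and is a genuinely useful reformulation (it is the finite-$n$ analogue of the Poissonization used in the proof of Theorem~\ref{ell theorem}), so it is worth keeping. But your stage three also contains an unproved claim: the uniqueness of the interior critical point of $x\mapsto D(\bpnx)$ on $[\frac1{n+1},1)$ is only verified in the paper for $n=1$ (the quintic/sextic analysis of Section~\ref{sect 2 colors}) and $n=2$; for general $n$ the derivative of \eqref{was19} is a polynomial of degree growing with $n$, and ``in the spirit of the $n=1$ analysis'' does not establish that it has a single root in the relevant interval. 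Without both stage two and the uniqueness in stage three, neither the existence claim of Conjecture~\ref{conjecture} nor the uniqueness claim of Conjecture~\ref{stronger conjecture} is established.
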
  

We cannot prove Conjecture \ref{conjecture}, but we believe it to be true, for the following reasons.
\begin{enumerate}
\item  It is true, trivially for $n=0$ and $n=1$, and by Section \ref{sect 3 colors}, for $n=2$. % It also agrees with the two-parameter families for the cases $n=3,\ldots,7$ in Section \ref{sect 4 colors}.

\item  By broad analogy, many symmetric payoff functions achieve their extreme values at points with lots of symmetry.  Indeed, Theorem \ref{uniform thm} asserts that for each $n$, $\DD$ achieves its \emph{minimum} value, zero, at the uniform distribution, corresponding to the maximum conceivable symmetry in $\bp$,  while the family in \eqref{one parameter}  corresponds to \emph{breaking} symmetry somewhat, but as little as possible.

\item  The one parameter family \eqref{one parameter} shows up in other extremal problems which share the feature that the \emph{labels} on the colors are irrelevant, and only the values of the probabilities matter.  In particular, in information theory, the one parameter families show that ``Fano's inequality is sharp;'' see Cover and Thomas \cite{cover}, (2.135) on page 40.

\item  %For moderate values of %$n$,  
%say $n=3$ to around $n=8$,  
%$n\leq 8$, 
For the moderate values $n=3,4,\ldots,8$, 
when generating %say 
a million random
%probability distributions 
points from the $n$-dimensional region specified by \eqref{sorted},  
the largest observed $\DD$ in the sample  came from a $\bp$ that was close, by eye, to the form of \eqref{one parameter}.

\end{enumerate}

\ignore{   %WARNING. perhaps we should rescue the cor.
\begin{conjecture}\label{conj inc}
The function $D(\bpnx)$ is \emph{strictly} increasing for $n \geq 0$.
\end{conjecture}
\begin{proof}
Not sure, corrected previous mistake.
%Trivial in light of the fact that $\frac{(n)_k}{n^k}$ is strictly increasing for $n\geq 0$.
\end{proof}

As a result, we get the following corollary.

\begin{cor}
If Conjectures \ref{conjecture} and \ref{conj inc} are true, then the maximum value of $\DD$ is not achieved by any $\bp$ with finite support.
\end{cor}
}  % end ignore --- recall the WARNING

The table below summarizes approximate 
extreme 
values under the one parameter families \eqref{one parameter} for $n=1, \ldots, 9$, 
using the notation $x_n = {\rm argmax}_x D(\bp(n,x))$.   $$
\begin{array}{ll}
%x_1 = 0.6966599, & D(x_1) = 0.06085 \\
% x_1 =      0.6966599465951643196
% D(x_1) = 0.060846799231813547761
x_1 =      0.6966599465951643196 &  D(x_1) = 0.06084679923181354776\\
%x_2 = 0.582011, & D(x_2) = 0.0842942 \\
% x_2 =      0.5820110139097399105
% D(x_2) = 0.084294192346146044463
x_2 =      0.5820110139097399105 & D(x_2) = 0.08429419234614604446 \\
%x_3 = 0.516003, & D(x_3) = 0.097663 \\
% x_3 =         0.5160030571683498864
% D(x_3) = 0.097662973595423267583
x_3 =         0.5160030571683498864 & D(x_3) = 0.09766297359542326758 \\
%x_4 = 0.47108, & D(x_4) = 0.106614 \\
% x_4 =       0.47108123676339401055
% D(x_4) = 0.10661363736945495196
x_4 =       0.4710812367633940106 & D(x_4) = 0.10661363736945495196 \\
%x_5 = 0.43766, & D(x_5) = 0.11316 \\
% x_5 =       0.43765985648455615135
% D(x_5) = 0.11316011048732238932
x_5 =       0.4376598564845561514 & D(x_5) = 0.11316011048732238932\\
%x_6 = 0.41138, & D(x_6) = 0.118225\\
% x_6 =      0.41138114794484457392
% D(x_6) = 0.11822473613430355437
x_6 =      0.4113811479448445739 &  D(x_6) = 0.11822473613430355437\\
% x_7 = 0.389926, & D(x_7) = 0.122298.
% x_7 =       0.38992587701011184638
% D(x_7) = 0.12229838762442936532
x_7 =       0.3899258770101118464 & D(x_7) = 0.12229838762442936532 \\
% x_8 = 0.37192393048779581350
% D(x_{8}) = 0.12566994796517442344
x_8 = 0.3719239304877958135 & D(x_{8}) = 0.12566994796517442344 \\
% x_{9} = 0.35650339133887214100
% D(x_{9}) = 0.12852218802677888163
x_{9} = 0.3565033913388721410 & D(x_{9}) = 0.12852218802677888163 \\
% x_{10} = 0.34307756601874660839
% D(x_{10}) = 0.13097724663540745829
%x_{10} = 0.3430775660187466084 & D(x_{10}) = 0.13097724663540745829.
\end{array}
$$
Figure \ref{fig one parameter} shows, for $n=1$ to 9,  $D(\bp(x,n))$ for $x \in [\frac{1}{n+1},1]$;  the graph plots
$\frac{n+1}n x - \frac{1}n$ 
%$1-\frac{n+1}n (1-x)$
versus $D(\bp(x,n))$, so that all 9 graphs use the same domain, [0,1].
%$D(x_n)$ for $x_n \in [\frac{1}{n+1},1]$, linearly transformed so that for each $n$, $x_n\in [0,1]$.

\section{Limit analysis of the one parameter family}
\ignore{
%\section{Convergence issues}

The distributions $\bp$ live in the space $\PRZ$, the set of all denumerable, nonnegative sequences indexed by elements of $\Zp$ summing to 1.  %This is a subset of the Hilbert cube $\H = [0,1]^{\mathbb{N}} = \prod_{i=0}^\infty S_i,$ where all $S_i = [0,1]$, which is a subset of $\mathbb{R}_+^\infty$, which is \emph{not} compact but has nice convergence properties.  
The standard topology for this space is the compact-open topology, in which $\psn \to \bp$ as $n\to\infty$ iff $p_i^{(n)} \to p_i$ for all $i\in \Zp$.  The total variation metric given in Equation \eqref{def dtv} is such that for $p\in \PRZ$,  $d_{TV}(\psn,\bp) \to 0$ if and only if $\psn \to \bp$ in the compact-open topology.  This is not true of nonnegative sequences in general, even when $\bp \in [0,1]^\mathbb{N}$, the Hilbert cube\footnote{The sequence $\psn =  \{1/n(j+1)\}_{j=0,1,2,\ldots,}$ is a counterexample.  However, the metric $d_{series}(\bp,\bq) = \sum_{i \ge 0} 2^{-i} |p_i - q_i |$ on the Hilbert cube \emph{is} such that $d_{series}(\psn,\bp) \to 0$ if and only if $\psn \to \bp$ in the compact-open topology.}.  Lastly, we note that starting with the uniform distribution on $n$ points and letting $n$ go to infinity shows that $\PRZ$ is not compact.  Indeed, the family $\psn = \bp(n,c/\sqrt{n})$ converges to the all-zero point, which is not a probability distribution.  %$D(\psn) \not\to 0$.  This 
%\end{remark}

%This is because summability is a necessary  condition in order to have equivalence (is this true?).\footnote{Another metric is given by $d_{series}(\bp,\bq) = \sum_{i \ge 0} 2^{-i} |p_i - q_i |$, which is  useful in spaces where each $p_i$ is bounded by some absolute constant $M>0$ for all $i$ and the sums of elements can be infinite, e.g., the sequence $\bp = \{1/(j+1)\}_{j=0,1,2,\ldots,}$ lies in the Hilbert cube $[0,1]^\mathbb{N}$, is not summable, but still converges to $\bp = (0,0,\ldots)$.}%. For sequences that are not summable, e.g., $\bp = \{1/(j+1)\}_{j=0,1,2,\ldots,}$, we still have $d_{series} \to 0$ if and only if $\psn \to \bp$.}

\begin{lemma}
The maps
\begin{align}
\label{M1 map} \bp & \mapsto M1(\bp), \\
\label{M2 map}\bp & \mapsto M2(\bp),
\end{align}
are continuous maps from $\PRZ$ to itself.
\end{lemma}

\begin{proof}
If the support of $\bp$ is finite, then $f_2 > 0$ and the lemma follows since each map is a composition of a finite number of continuous functions.

Hence, suppose the support of $\bp$ is not finite.  
$p_i \mapsto f_2 = \sum_{i\geq 0} p_i^2>0$ is continuous by the dominated convergence theorem, so \eqref{M1 map} follows.
%.  Since $p_i\mapsto p_i^2$ and $p_i \mapsto f_2$ are both continuous and $f_2 >0$, we conclude that $\bp \mapsto M1(\bp)$ is continuous.
%The proof of \ref{M1 map} follows immediately from the fact that 

To prove \eqref{M2 map}, we use the inequality of the geometric and arithmetic means and the fact that the sequences sum to 1, so that for all indices $j_1,j_2,\ldots$, we have
$$
p_{j_1} p_{j_2} \ldots p_{j_n} \leq \left(\frac{p_{j_1} + \ldots p_{j_n}}{n}\right)^n \leq n^{-n}.
$$
%We can then estimate the sum given in \eqref{def M2} from above by
An upper bound for the sum given in \eqref{def M2} is
$$
\sum_{k\geq 0} (k+1)! k^{-k}.
$$
There are many ways to prove that this sum is convergent.  One way is to use Stirling's formula, which says $\displaystyle \lim_{k\to\infty} \frac{k!}{ (k/e)^k \sqrt{2 \pi k}}=1$, and apply the standard root test.  Equation \eqref{M2 map} then follows by the dominated convergence theorem.  %Hence we conclude that the map $\bp \mapsto M2(\bp)$ is continuous.

%Finally, that $\bp \mapsto \DD$ is continuous follows from \eqref{M1 map} and \eqref{M2 map}.
\end{proof}

\begin{cor}
$\bp \mapsto \DD$ is a continuous map from $\PRZ$ to $[0,1]$.
\end{cor}

%\section{ To be re-written}

In Section \ref{sect dtv}, for each fixed probability distribution $\bp$ we defined the discrepancy value of $\bp$ using total variation distance.  In this section we want to quantify: over \emph{all} probability distributions $\bp$, with the matching pair chosen by Method 1 or Method 2, how far apart are the two distributions on the color of a pair of socks?

When the support of $\bp$ is finite, let us define 
\begin{equation*}
C_n  :=  \max_{{\tt supp}(\bp) = n} \DD,  \ \ \ \ \ \ \ C_\infty  := \sup_n C_n.
\end{equation*}
%In Section \ref{sect dtv}, for each fixed $\bp$ we defined the discrepancy value of $\bp$ using total variation distance.  In this section, we ask what is the maximum value this can be
%$$
%\DD = d_{TV}(X(\bp),Y(\bp))
%$$

The quantity that would answer the question posed in the beginning of the section is 
$$
C := \max_{\bp \in \PRZ} \DD.
$$

\begin{conjecture}
\label{conj nontrivial}
$$C_\infty = C < 1.$$
\end{conjecture}

We are unable to prove conjecture \ref{conj nontrivial}, but we are able to prove the limiting total variation under the one-parameter family.
} % end ignore soft limit discussion

 \begin{theorem}\label{ell theorem}
For $c \in (0,\infty)$ define
\begin{equation}\label{ell def}
\ell(c) = \frac{c^2}{1+c^2} -  \int_0^\infty c^2 t e^{-c t - t^2/2}\, dt  .
\end{equation}

For any $c \in (0,\infty)$ and $n > 1/c^2$, let $\psn = \bp(n,c/\sqrt{n})$ be  the distribution governed by \eqref{one parameter} with $x=c/\sqrt{n}$. 
Then
\begin{equation}
\label{ell limit}
 \lim_{n \to \infty} \DDn = \ell(c),
 \end{equation}
 where $\ell$ is defined by \eqref{ell def}.
\end{theorem}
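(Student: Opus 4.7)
The plan is to start from formula \eqref{was19}, which with $x=c/\sqrt n$ reads
$$D(\psn)=\frac{c^2}{c^2+(1-c/\sqrt n)^2}-T_n,\qquad T_n:=\frac{c^2}{n}\sum_{k=0}^{n}(k+1)\,q_k^{(n)}\,\bigl(1-c/\sqrt n\bigr)^{k},$$
where $q_k^{(n)}:=n!/((n-k)!\,n^k)=\prod_{j=0}^{k-1}(1-j/n)$. The first summand is a bounded rational function of $1/\sqrt n$ and converges to $c^2/(1+c^2)$ by inspection, so everything reduces to showing that $T_n\to c^2\int_0^\infty t\,e^{-ct-t^2/2}\,dt$.

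I would split $T_n=T_n^{(1)}+T_n^{(2)}$ according to $k+1=k+1$, and handle the two pieces separately. The piece $T_n^{(2)}=(c^2/n)\sum_{k}q_k^{(n)}(1-c/\sqrt n)^{k}$ is an error term: using the elementary bound $q_k^{(n)}\le e^{-k(k-1)/(2n)}$ and comparing the sum to a Gaussian integral after the substitution $k=u\sqrt n$ gives $\sum_{k\ge 0}q_k^{(n)}(1-c/\sqrt n)^k=O(\sqrt n)$, hence $T_n^{(2)}=O(1/\sqrt n)\to 0$. For the main piece, introduce lattice points $t_k:=k/\sqrt n$ with spacing $\Delta t=1/\sqrt n$ and rewrite
$$T_n^{(1)}=c^2\sum_{k=0}^{n}t_k\,q_k^{(n)}\,(1-c/\sqrt n)^{k}\,\Delta t,$$
which is manifestly a Riemann sum. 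The pointwise limit of its integrand is identified by the standard estimates $q_k^{(n)}=\exp\!\bigl(-k^2/(2n)+O(k/n)+O(k^3/n^2)\bigr)\to e^{-t^2/2}$ and $(1-c/\sqrt n)^k\to e^{-ct}$ whenever $k_n/\sqrt n\to t$, yielding the integrand $t\,e^{-t^2/2-ct}$.

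The main obstacle is to justify exchanging the sum and the limit uniformly over the full range $0\le k\le n$. I would use the dominated convergence theorem for sums via the uniform bound
$$t_k\,q_k^{(n)}(1-c/\sqrt n)^{k}\le t_k\,\exp\!\Bigl(-\tfrac{k(k-1)}{2n}-\tfrac{ck}{\sqrt n}\Bigr)\le t_k\,\exp\!\Bigl(-\tfrac{t_k^{2}}{2}+\tfrac{t_k}{2}-c\,t_k\Bigr),$$
valid for all $n\ge 1$ (and $n>c^{2}$, so that $1-c/\sqrt n>0$). The envelope $g(t)=t\,e^{-t^{2}/2-(c-1/2)t}$ is integrable on $[0,\infty)$ for every $c>0$ since the $e^{-t^{2}/2}$ factor dominates, so the Riemann sum converges to $c^2\int_0^\infty t\,e^{-t^2/2-ct}\,dt$. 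Combining with $T_n^{(2)}\to 0$ and the first-summand limit gives \eqref{ell limit}. The delicate part is ensuring that the slippage $t_k/(2\sqrt n)$ coming from the $-k/(2n)$ correction in $\log q_k^{(n)}$ is genuinely absorbed into an $n$-independent integrable envelope; the negligibility of $T_n^{(2)}$ and the convergence of the first summand are then comparatively routine.
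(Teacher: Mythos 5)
Your argument is correct, but it is not the route the paper takes. The paper proves Theorem \ref{ell theorem} by Poissonization: it attaches i.i.d.\ exponential holding times to the sock picks so that the colors arrive as independent Poisson processes, writes $\p(Y=0)=\p(T_0<\min_{i\ge1}T_i)$ with $T_0$ Gamma$(2,c)$-distributed, shows $\p(\min(T_1,\dots,T_n)>t)=\p(T_1>t)^n\to e^{-t^2/2}$, and justifies the limit of the resulting integral by monotone convergence; the Method~1 term is handled exactly as you do. Your proof is instead the direct Riemann-sum analysis of \eqref{was19}, which the paper explicitly mentions as an alternative in the one-sentence remark following its proof but does not carry out. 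You supply precisely the estimates that remark leaves implicit: the identity $(k+1)!\binom{n}{k}n^{-k}=(k+1)q_k^{(n)}$ with $q_k^{(n)}=\prod_{j<k}(1-j/n)\le e^{-k(k-1)/(2n)}$, the disposal of the ``$+1$'' piece as $O(n^{-1/2})$, the pointwise identification of the integrand along $k\sim t\sqrt{n}$, and the $n$-independent integrable envelope $t\,e^{-t^2/2-(c-1/2)t}$ that licenses the interchange of limit and sum (your worry about the $t_k/(2\sqrt{n})$ slippage is resolved exactly as you suggest, since $e^{t_k/(2\sqrt n)}\le e^{t_k/2}$ for $n\ge1$). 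The trade-off: the Poissonization proof is structurally transparent (independence of the $T_i$ does all the work, and the same machinery immediately adapts to the left-right shoe setting of Theorem \ref{ell theorem shoes}), whereas your computation is elementary and self-contained, needing only the combinatorial formula \eqref{was19} and dominated convergence, at the cost of more careful bookkeeping with the uniform domination.
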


%Our proof uses Poisson processes, and is postponed until the end of the section.  %Section \ref{sect pp}.  

%\begin{proof}[Proof of Theorem \ref{ell theorem}]
\begin{proof}
%\section{A Poisson process limit}
%\label{sect pp}

Extend Method 2 beyond the time of the first matching pair; i.e., pick socks forever.  For each color $i$ let $N_i$ be the number of sock picks needed to get the second sock of color $i$.  As the color varies, these random variables are \emph{dependent}, since for any two distinct colors $i,j$ and time $n \ge 2$, 
$0 = \p(N_i = N_{j}=n) < \p(N_i = n) \, \p(N_{j}=n)$.  There is a standard technique to deal with this dependence, used in Markov chains\footnote{see for example \cite{lawler}.},  which is to take a sequence of independent exponentially distributed holding times $Y_1,Y_2,\ldots$, with $\p(Y_n >t)=e^{-t}$, and declare that the $n$th sock arrives at time $Y_1+Y_2+\cdots+Y_n$.\footnote{The number of socks picked by time $t$ is thus Poisson distributed, with mean $t$.  Write 
 $C_i(t) = $ the number of socks of color $i$ chosen by time $t$.  As $i$ varies, the counts $C_i(t)$ are mutually independent;  this observation  is known as \emph{Poissonization}. See exercise XII.6.3 in Feller
 \cite{Feller1}.}  With values in $(0,\infty)$, the
time $T_i$ at which color $i$ is first seen for the second time can be expressed as $T_i = Y_1+\cdots + Y_{N_i}$.
The distribution of the color of the first matching pair found, initially specified by \eqref{def M2}, can also be expressed as 
$$
\p(Y=i) = P(T_i < \min_{j\neq i} T_j).
$$
For each color $i$, the times at which socks of color $i$ arrive form a Poisson arrivals process with rate $p_i$, and
as the color varies, these processes are mutually independent;  in particular  the second arrival times $T_i$
are mutually independent.

We are considering socks distributed according to $\bp(n,c/\sqrt{n})$, that is, with $y := (1-c/\sqrt{n})$,
\begin{equation}\label{n distribution}
p_0 = c/\sqrt{n}, p_1=y/n,p_2=y/n,\ldots,p_n=y/n.
\end{equation} 
Speed up time by a factor of $\sqrt{n}$;  now socks of color 0 arrive at rate $c$, and for each other color $i=1$ to $n$, socks of color $i$ arrive at rate $p_i \sqrt{n} = y/\sqrt{n}$.  For $t>0$, and for each $i=1$ to $n$, the number $Z$ of socks of color $i$ collected by time $t$ is Poisson with parameter $\lambda=ty/\sqrt{n}$, and the event $\{T_i >t\}$ is the event $\{Z<2\} = \{Z=0$ or 1\}, with probability
\begin{eqnarray}
\nonumber \p(T_i > t) &=&  \p(Z=0)+\p(Z=1) \\ 
\nonumber                   &=& e^{-\lambda}(1+\lambda)  \\
\label{fn} &=& \exp\left(-\frac{t y}{\sqrt{n}}\right)\left(1 + \frac{t y}{\sqrt{n}}\right) \\
\nonumber                   &=&  1 - \frac{t^2 y^2}{2n} + O(n^{-3/2}).
\end{eqnarray}
The easy way to see the result above is to argue that $\lambda$ is small, so $e^{-\lambda}(1+\lambda) =
(1 - \lambda + \lambda^2/2 -\lambda^3/6 + \cdots)(1+ \lambda) = 1 - \lambda^2 + \lambda^2/2 +O(\lambda^3)= 1 - \lambda^2/2 +O(\lambda^3)$.

The event $\{\min(T_1,\ldots,T_n)>t\}$ is the intersection of the events  \mbox{$\{T_i>t\}$}, so using the mutual independence, together with $y \to 1$, for each $t>0$,
\begin{eqnarray*}
\p(\min(T_1,\ldots,T_n)>t) &=& \p(T_1>t)^n = \left(1 - \frac{t^2 y^2}{2n} + O(n^{-3/2})\right)^n \\
&\to& \exp(-t^2/2).
\end{eqnarray*} 

Finally, we argue that the density of $T_0$, the second arrival time in a Poisson process with rate $c$, is given by
$$
f(t) = c^2 t e^{-c t}.
$$
This is a standard fact, known to some as the density of the Gamma distribution with shape parameter 2 and scale parameter $c$.  Using the independence of $T_0$ and $\min(T_1,\ldots,T_n)$,  we can condition on the value $t$ for $T_0$ to get 
\begin{eqnarray}
\nonumber \p_n(Y=0) &=& \p(\min(T_1,\ldots,T_n)>T_0) \\ 
\nonumber                    &=&  \int_0^\infty \p(\min(T_1,\ldots,T_n)>t) f(t) \ dt \\
\nonumber                    &=& \int_0^\infty \p(\min(T_1,\ldots,T_n)>t) \ c^2 t e^{-c t} \ dt \\
\label{unif conv}           &\to &  \int_0^\infty c^2 t e^{-c t} e^{-t^2/2}dt.
\end{eqnarray}

The above amounts to a calculation of the limit, as $n \to \infty$, of   $\p_n(Y=0)$, corresponding to Method 2
when the underlying colors come from \eqref{n distribution}.  

% ADD material on uniform convergence.
Of course, we must justify the passage to the limit in \eqref{unif conv}.  Here we have $f_n(t) := \p( \min(T_1, \ldots, T_n) ) \to \exp(-t^2/2) =: f(t)$ point-wise, for each $t>0$, but we claim in \eqref{unif conv} that the integrals also converge.  If we interpret the (improper) integral as the Lebesgue integral, then we can invoke the Monotone Limit Theorem: it is easy to check that $f_n(t) \geq f_{n+1}(t)\geq 0$ for all $n$, and that $\int f_1(t) dt < \infty$, hence $\int f_n(t) \, dt \to \int f(t)\, dt$.\footnote{See Chapter 2, Exercise 15 of Folland \cite{Folland}.}

Interpreting the improper integral as a Riemann integral requires more work to justify passage to the limit in \eqref{unif conv}, and is left as an exercise; see for example Chapter 7, Exercise 12 of Rudin \cite{Rudin}. 

% End of editing
For Method 1 the calculation is easier: using 
\eqref{def f2} we have $f_2 = p_0^2 + p_1^2 + \cdots + p_n^2 = (c/\sqrt{n})^2 + n(y/n)^2 = c^2/n + y^2/n$ and
$$
\p_n(X=0) = \frac{p_0^2}{f_2} = \frac{c^2/n}{c^2/n+y^2/n} = \frac{c^2}{c^2+y^2} \to \frac{c^2}{c^2+1}.
$$
 
At \eqref{was19} we had already argued that once $n$ is large enough that $p_0>p_1$ we have the simplification, for our one parameter family, that $\DDn = \p_n(X=0) - \p_n(Y=0)$.
 \ignore{
As in the argument used to prove Lemma \ref{lemma 1} for the first equality below, and using 
\eqref{equal} for the second equality, we have
\begin{eqnarray*}
   \p_n(X=0) - \p_n(Y=0) &=& \sum_1^n \p_n(Y=i) - \p_n(X=i) \\
   &=& n(\p_n(Y=1) - \p_n(X=1)).
\end{eqnarray*} 
Then again from Lemma \ref{lemma 1},
$$ \DDn = \dtv(X,Y) = |\p_n(X=0) - \p_n(Y=0) |.
$$
Using \eqref{ratio nonstrict}, once $n$ is large enough that $p_0>p_1$, we have $\p_n(X=0)/\p_n(Y=0) \ge 
\p_n(X=1)/\p_n(Y=1)$.  Hence $\p_n(X=0)/\p_n(Y=0) \ge 
(n\p_n(X=1))/(n\p_n(Y=1)) = (1-\p_n(X=0))/(1-\p_n(Y=0))$, hence $\p_n(X=0)\ge \p_n(Y=0)$, so the absolute values in the display above are superfluous.
} %end ignore
Combining this calculation of $\DDn$ with the limit values derived for $\p_n(Y=0)$ and \mbox{$\p_n(X=0)$}, \eqref{ell limit} follows.
\end{proof}

We note that instead of invoking Poissonization, as in the above proof, one can argue directly with the explicit expression in \eqref{was19}, to show that under $x=c/\sqrt{n}$ and $k = t \sqrt{n}$, the sum in \eqref{was19} is a Riemann approximation
for  $ \int_0^\infty c^2 t e^{-c t} e^{-t^2/2}dt$.
%;  the choice of procedure is purely a matter of taste.

\section{Discussion}

\begin{figure}[ht]
\includegraphics[scale=.5]{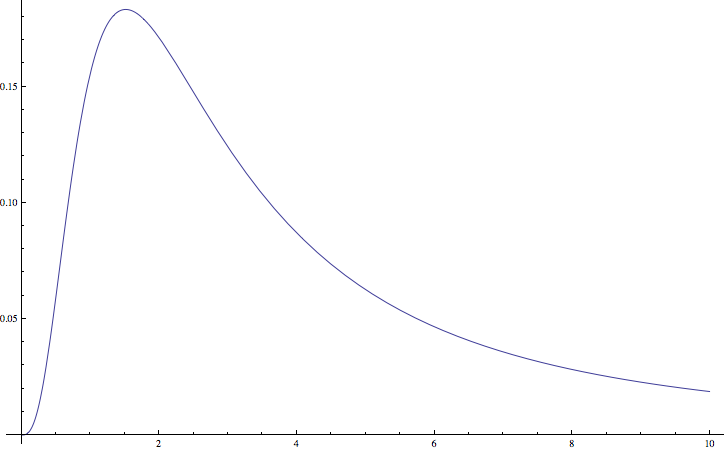}
\caption{Plot of $c$ versus $\ell(c)$ for $c=0$ to 10.  The maximum occurs at $c_0 \doteq1.514$ and has value
$\ell(c_0)\doteq  0.18320.$\label{fig ell}
%0.1832000624087106060398222190233627698786.
}
\end{figure}
%c  =1.513994072132400395081901794814404204764

If Conjecture \ref{conjecture} is true, it will follow that Conjecture \ref{weakest conjecture} is also true,
with the value of the universal constant for a pair of socks given by
\begin{equation}\label{universal constant}
 \ell_0 = \sup_c \ell(c) =0.1832000624087106\ldots  \ .
\end{equation}
The argument requires two parts.  The first part is to show that $\ell_0$, defined in \eqref{def universal} as the sup
of $\DD$ over \emph{all discrete} distributions, is equal to the sup over distributions with \emph{finite} support.  This is ``soft'' analysis, showing first that $\bp \mapsto \DD$ is continuous, hence given 
$\bp$ with discrepancy greater than $\ell_0-\varepsilon$ we can find a nearby distribution $\bp'$ with  finite support, close enough to $\bp$ to guarantee that its discrepancy is greater than $\ell_0 - 2 \varepsilon$.
The second part, giving the concrete value for $\ell_0$, 
%of proving that Conjecure \ref{conjecture} implies Conjecture \ref{weakest conjecture} with $ \ell_0 = \sup_c \ell(c)$ 
uses compactness:  given distributions $\psn = \bpnxn$
with discrepancies converging to $\ell_0$, the values $c_n := x_n \sqrt{n} \in [0,\infty]$, $n\geq 1$, lie in a %convex (my typo)
compact
set, and hence there must be convergent subsequences.  If $c_{n_k} \to c_0$ and $c_0 \in (0,\infty)$, then the proof of
Theorem \ref{ell theorem} already shows that the associated discrepancies converge to $\ell(c_0)$.  If 
$c_{n_k} \to c_0$ with $c_0=0$ or $c_0=\infty$, a small extension of the proof of Theorem \ref{ell theorem} would show that the associated discrepancies would converge to 0.  So indeed, $c_n \to c_0$ and $\DDn \to \ell(c_0)$.

\section{Shoes instead of socks: a matching left-right pair}

Suppose, instead of wanting to collect a pair of matching socks, we want a pair of matching shoes.
Naturally, this means one left shoe, and one right shoe,  both of the same color.  There are two reasonable ways to extend our study to this situation.

\subsection{One distribution for left colors, another distribution for right colors}

The setup here involves two 
discrete
probability distributions, say $\bp$ for the color $S$ of a left shoe, and $\bq$ for the color $S'$ of a right shoe.  The analog of \eqref{def f2}  is
\begin{equation}\label{def f2 LR}
f_2 = \p(S=S')  = \sum_i \p(S=S'=i) = \sum_{i} p_i \, q_i
\end{equation}
for the probability that a random left shoe and a random right shoe match.
We require that for at least one value $i$, $p_i q_i>0$.
The analog of \eqref{def M1} is the Method 1 distribution for the color $X=X(\bp,\bq)$ of a matching left-right pair
\begin{equation}\label{def M1 LR}
  \p(X=i) = \p(S=i|S=S') = \frac{p_i q_i}{f_2}.
\end{equation}

For method 2, we assume that at times $1,3,5,\ldots$, one left shoe is collected, and at times $2,4,6,\ldots$, one right shoe is collected. 
Suppose that at time $k-1$, there is not yet a matching left-right pair,  but at time $k$, there is;  then $Y = Y(\bp,\bq)$ is the color of the shoe collected at time $k$.\footnote{There are other sensible ways to determine the matching color under sequential collection of shoes, for example, selecting one left and one right shoe each at time $1, 2, 3, \ldots$ and breaking ties via a coin flip.  Even here, choices remain.  For example, if the outcome is $L_1=$ red, $R_1=$ blue, $L_2=$ red, $R_2=$ white, $L_3=$ white, $R_3=$ red,  then the tiebreak might be specified as equal odds for white versus red, or, since the available matches at time 3 
are $(L_1,R_3), (L_2,R_3)$, and $(L_3,R_2)$, as 2 to 1 in favor of red over white.  For this outcome, our specification in the the main text is white, since the earliest match occurs at time 5, when $L_3 =$ white is observed. } 

The analog of discrepancy is now

\begin{equation}\label{def D LR}
\DDLR = \dtv(X(\bp,\bq),Y(\bp,\bq)).
\end{equation}

It is fairly easy to see that for this situation, the analog of Conjecture \ref{weakest conjecture} is \emph{false}; that is, the supremum of the discrepancy over all pairs of distributions is no smaller than the trivial upper bound on total variation distance:
\begin{equation}\label{non conjecture}
1= \sup_{\bp,\bq} \DDLR .
\end{equation}

We give a brief sketch
of a proof of \eqref{non conjecture}: 
with $a=a(n)=n^{-1/4}$ and $b=b(n)=n^{-2/3}$  let $\bp=\bp(n, a)$ and $\bq=\bp(n,b)$; in other words, 
$p_0 = \p(S=0)=a$, $q_0 = \p(S'=0)=b$ and  for $i=1$ to $n$, $p_i =\p(S=i)=(1-a)/n$, $q_i = \p(S'=i)=(1-b)/n$, with $a=n^{-1/4}$, $b=n^{-2/3}$.  We have $p_0 q_0 = n^{-11/12}$ and 
$$ \sum_{i=1}^n p_i q_i  = n \frac{1-a}{n} \, \frac{1-b}{n} \sim \frac{1}n = o( p_0 q_0),
$$ 
so the Method 1 distribution converges to point mass at color 0, i.e., $\p_n(X=0) \to 1$.  To see that the Method 2 distribution has, in the limit, probability zero of getting color 0, consider collecting alternately left and right shoes forever.  At time $m= 2 n^{5/8}$,  we will have collected $n^{5/8}$ left and $n^{5/8}$ right shoes.  Thanks to the small value $q_0 = b = n^{-2/3}$, we expect only $n^{-1/24}$ left shoes of color 0 at time $m$, so with high probability, we do not yet have a matching pair of color 0.  But, at time $m$,  for \emph{each} color $i=1$ to $n$, the number of left shoes of color $i$ is Binomial($m,(1-a)/n$), and hence is greater than zero with probability asymptotic to $m/n \sim n^{-3/8}$.  Independently, the number of right shoes of color $i$ is greater than zero  with probability asymptotic to 
$ n^{-3/8}$;  hence the probability of at least one pair of color $i$ is asymptotic to $n^{-3/4}$.  The number $W$ of colors $i>0$ for which we have a pair has  $\e W \sim n^{1/4}$, and the $n$ events are negatively correlated with each other, so $\var W < \e W$.  By Chebyshev's inequality, $\p(W=0)  \le \var W / (\e W)^2 = O(n^{-1/4})$.
So at time $m$, we are unlikely to have any pair of color 0, and unlikely not to have at least one pair of some other color,  hence $\p_n(Y=0) \to 0$.

\subsection{With the constraint $\bp = \bq$}

Now suppose that we declare that the distribution $\bp$ for left shoes  and the distribution $\bq$ for right shoes must be equal.  This does not reduce consideration of the distribution of a matching pair to the situation for socks; 
under the alternating left-right procedure, if we get a blue left shoe at time 1, a red right shoe at time 2, and another blue left shoe at time 3, then we still have not collected a matching pair.

The analog of Conjecture \ref{weakest conjecture}, for the situation of a matching left-right pair of shoes under the constraint of equal distributions, is plausible:
\begin{conjecture}\label{weakest conjecture shoes}

\begin{equation}
\label{weak equation shoes}
  \sup_\bp \DDLL < 1.
\end{equation}
\end{conjecture}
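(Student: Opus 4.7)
The strategy parallels the program outlined after Conjecture \ref{weakest conjecture} for the socks case, now exploiting the symmetry constraint $\bp = \bq$ to foreclose the left/right imbalance that drove \eqref{non conjecture}. I would proceed in four steps: (1) continuity of the map $\bp \mapsto \DDLL$ on the space of discrete probability distributions equipped with total variation; (2) reduction to distributions of finite support by truncation; (3) for each fixed support size $n+1$, a compactness-plus-pointwise-bound argument yielding $\sup_{|\operatorname{supp}\bp|\le n+1}\DDLL<1$; and (4) control of the sup as the support size grows, via a one-parameter scaling limit analogous to Theorem \ref{ell theorem}.

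Steps~(1)--(3) are routine. For Step~(1), the Method~1 map $\bp \mapsto (p_i^2/f_2)$ is manifestly continuous in total variation; for Method~2 with alternation I would write down the analog of \eqref{def M2}---a sum over interleaved L/R color patterns terminating at a ``completion'' step---and verify continuity by dominated convergence, using an AM--GM / Stirling-type bound on each summand. Step~(2) is a standard truncation argument combined with Step~(1). Step~(3) rests on the observation that for each individual $\bp$ the laws of $X(\bp,\bp)$ and $Y(\bp,\bp)$ share the support of $\bp$, so $\DDLL < 1$ pointwise; compactness of the simplex then upgrades this to a bound that is strict uniformly in $\bp$ for each fixed $n$.

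Step~(4) is the technical heart. I would Poissonize, as in the proof of Theorem \ref{ell theorem}: replace the alternating procedure by two independent rate-$1/2$ Poisson processes for left and right arrivals, and for each color $i$ let $T_i^L, T_i^R$ be independent $\mathrm{Exp}(p_i)$ variables; in the Poissonized model the Method~2 color $Y$ then satisfies $Y \stackrel{d}{=} \mathop{\mathrm{argmin}}_i \max(T_i^L, T_i^R)$. Under the scaling $\bp = \bp(n, c/\sqrt n)$, the birthday-paradox estimate used in Theorem \ref{ell theorem} still applies and gives $\p(\min_{i\ge 1}\max(T_i^L, T_i^R) > t) \to e^{-t^2/4}$, while conditioning on $\max(T_0^L, T_0^R)$ (whose density is $c\,e^{-ct/2}(1 - e^{-ct/2})$) yields the explicit one-parameter limit
\begin{equation*}
\ell^{\text{shoe}}(c) \;=\; \frac{c^2}{1+c^2} \,-\, \int_0^\infty c\,e^{-t^2/4}\,e^{-ct/2}\bigl(1 - e^{-ct/2}\bigr)\,dt.
\end{equation*}
Both endpoints give $\ell^{\text{shoe}}(c) \to 0$ (for $c\to\infty$ the substitution $s = ct$ shows the integral tends to $1$, while the first term tends to $1$ as well), so by continuity $\sup_c \ell^{\text{shoe}}(c) < 1$ strictly.

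The genuinely hard step---and the main obstacle---is bridging Step~(4) to a general extremizing sequence: absent a shoe-analog of Conjecture \ref{conjecture}, the maximizers need not lie on the one-parameter family \eqref{one parameter}. I would attempt this by tightness, using the $\bp = \bq$ symmetry to block the rare-heavy mismatch exploited in \eqref{non conjecture}: if $\bp^{(n)}$ is an extremizing sequence, pass to a subsequence so the mode mass $p_0^{(n)}$ converges to some $p_0^* \in [0,1]$. The case $p_0^* > 0$ reduces (after passing to a further subsequential weak limit) to Step~(3), and the case $p_0^* = 0$ reduces, after rescaling, to the Poissonized one-parameter analysis above. Ruling out intermediate scales, and a diverging number of small ``bumps'' in the tail, is where the real difficulty lies.
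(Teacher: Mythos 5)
This statement is a \emph{conjecture} in the paper: the authors offer no proof of \eqref{weak equation shoes}, and they explicitly note that even the socks analog (Conjecture \ref{weakest conjecture}) is only known to follow \emph{conditionally} on the unproven Conjecture \ref{conjecture}. So there is no proof of the paper's to compare against, and the question is whether your proposal closes the gap. It does not, and you correctly identify where it fails. Steps (1)--(3) establish only that for each fixed support size $n+1$ the supremum of $\DDLL$ is strictly below $1$; since these bounds need not be uniform in $n$, they say nothing about $\sup_\bp \DDLL$. Indeed, the paper's own argument for \eqref{non conjecture} shows exactly this phenomenon in the unconstrained left--right setting: every finite-$n$ supremum is $<1$, yet the overall supremum equals $1$. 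Step (4) is sound but only recovers the paper's Theorem \ref{ell theorem shoes} restricted to the diagonal $a=b=c$: after the time rescaling $t\mapsto 2t$ your $\ell^{\text{shoe}}(c)$ is identical to the paper's $\ell(c,c)$ from \eqref{ell def2}, so this part is a correct (if sketched) computation, not new leverage.

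The essential missing piece is the bridge you flag at the end: showing that an extremizing sequence $\psn$ must, after relabeling and passing to subsequences, be asymptotically captured either by a fixed finite-support limit or by the $\bp(n,c/\sqrt{n})$ scaling regime. This is precisely the analog of Conjecture \ref{conjecture}, which the authors cannot prove even for socks (where they have verified it only for $n\le 2$, by the explicit algebraic work of Sections \ref{sect 2 colors} and \ref{sect 3 colors}). Your appeal to the $\bp=\bq$ symmetry to ``block the rare-heavy mismatch'' is a heuristic, not an argument: nothing in Steps (1)--(4) rules out extremizing sequences with, say, several mass scales simultaneously, or a slowly spreading profile not of the form \eqref{one parameter}, and handling ``intermediate scales and a diverging number of small bumps'' is the entire difficulty. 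One smaller caveat: in your Poissonization the first left arrival of color $i$ should be exponential with rate $p_i/2$ (not $p_i$) if left shoes arrive at rate $1/2$, and the identification of the discrete alternating procedure with $\mathop{\mathrm{argmin}}_i \max(T_i^L,T_i^R)$ requires a de-Poissonization step and a treatment of the tie-breaking convention in the paper's footnote; these are repairable, but the structural gap above is not, so the conjecture remains open.
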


Furthermore, we can even propose a value for the universal constant for shoes, given by the left side of
\eqref{weak equation shoes}.  It comes from an analog of Theorem \ref{ell theorem}.
This analog of Theorem \ref{ell theorem} is easiest to understand without the constraint $\bp=\bq$.

 \begin{theorem}\label{ell theorem shoes}
For $a,b \in (0,\infty)$ define
\begin{equation}\label{ell def2}
\ell(a,b) = \frac{ab}{1+ab} -  \int_0^\infty \left( ae^{-a t} + be^{-bt} - (a+b)e^{-(a+b)t} \right) e^{- t^2}\, dt  .
\end{equation}

For $a,b>0$ and sufficiently large $n$, let 
\begin{equation}\label{thm 3 setup}
\psn = \bp(n,a/\sqrt{n}), \ \qsn = \bq(n,b/\sqrt{n})
\end{equation} 
as in \eqref{one parameter}. 
Then
\begin{equation}
\label{ell limit shoes}
 \lim_{n \to \infty} D(\psn,\qsn) = \ell(a,b).
\end{equation}

\end{theorem}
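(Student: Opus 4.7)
My approach mirrors the proof of Theorem~\ref{ell theorem}: I reduce $D(\psn,\qsn)$ to a single signed term by symmetry, compute the Method~1 limit directly, and attack Method~2 by Poissonization. For the reduction, both $\psn$ and $\qsn$ are invariant under permutations of the colors $\{1,\ldots,n\}$, so $\p_n(X=i)=\p_n(X=1)$ and $\p_n(Y=i)=\p_n(Y=1)$ for every $i\ge 1$. Since each distribution sums to $1$,
\[
\sum_{i=1}^n |\p_n(X=i)-\p_n(Y=i)| = n\,|\p_n(X=1)-\p_n(Y=1)| = |\p_n(X=0)-\p_n(Y=0)|,
\]
so $D(\psn,\qsn) = |\p_n(X=0)-\p_n(Y=0)|$. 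The Method~1 piece is elementary: $f_2 = p_0 q_0 + n\cdot p_1 q_1 = ab/n + (1-a/\sqrt n)(1-b/\sqrt n)/n$, so
\[
\p_n(X=0) = \frac{ab}{ab+(1-a/\sqrt n)(1-b/\sqrt n)} \longrightarrow \frac{ab}{1+ab},
\]
matching the first term of $\ell(a,b)$.

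For Method~2 I would Poissonize: replace the alternating picks by two independent unit-rate Poisson streams (one for left shoes, one for right), then speed up time by $\sqrt n$. For each color $i$, let $T_i^L,T_i^R$ be the first arrival times of color~$i$ in the two streams, and set $T_i=\max(T_i^L,T_i^R)$; the color of the first matching pair is the one achieving $\min_i T_i$. After the speed-up $T_0^L\sim \mathrm{Exp}(a)$ is independent of $T_0^R\sim \mathrm{Exp}(b)$, so by inclusion--exclusion the density of $T_0$ is
\[
f_{T_0}(t) = a e^{-at} + b e^{-bt} - (a+b) e^{-(a+b)t},
\]
which is exactly the bracketed factor in \eqref{ell def2}. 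For $i\ge 1$, $T_i^L$ and $T_i^R$ are independent exponentials with rates tending to $0$ like $1/\sqrt n$, so $\p(T_i\le t) = t^2/n + O(n^{-3/2})$. Mutual independence across colors (a Poissonization dividend) yields $\p(\min_{i\ge 1}T_i>t) = (1-t^2/n+O(n^{-3/2}))^n \to e^{-t^2}$. Conditioning on $T_0$ and applying dominated convergence (with integrable dominator $f_{T_0}$),
\[
\p_n(Y=0) \longrightarrow \int_0^\infty e^{-t^2} f_{T_0}(t)\,dt,
\]
and subtracting from $ab/(1+ab)$ yields $\ell(a,b)$.

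The main obstacle is that Section~8.1 specifies strictly alternating $L$-$R$-$L$-$R$ picks, whereas the Poissonized model above interleaves the two streams randomly; unlike the single-stream setting of Theorem~\ref{ell theorem}, Poissonization here is not a mere reparametrization of time. I would address this by running the entire argument in the discrete alternating model, where $T_i^L$ becomes a geometric hitting time with parameter $p_i$ among the left picks and $T_i^R$ geometric with parameter $q_i$ among the right picks, independent across $L/R$ and across colors. Rescaling time by $\sqrt n$ produces the same exponential limits, and the probability estimates of the previous paragraph go through with identical constants, so the resulting integral is unchanged. Verifying that the discrete-to-continuous corrections remain uniform in $t$, so that dominated convergence still applies, is the one technical point requiring care.
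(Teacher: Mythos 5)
Your proposal is correct and follows essentially the same route as the paper, which itself only sketches this proof as a variant of Theorem \ref{ell theorem}: the inclusion--exclusion density $ae^{-at}+be^{-bt}-(a+b)e^{-(a+b)t}$ for $T_0$, the $e^{-t^2}$ limit for $\min_{i\ge 1}T_i$, and the elementary $ab/(1+ab)$ limit for Method 1 all match. The one point where you diverge --- worrying that two independent Poisson streams do not exactly time-change the alternating $L$-$R$ picks, and repairing this with discrete geometric hitting times --- is handled in the paper by Poissonizing whole left-right \emph{pairs} on a single clock, which is an exact reparametrization up to a negligible one-step offset; either repair works.
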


\begin{proof}
The argument 
 closely follows the proof for Theorem \ref{ell theorem}.  We omit details, apart from sketching the main differences: under the distributions in \eqref{thm 3 setup},
collecting left-right pairs with mean $1/\sqrt{n}$ holding times between pairs, the left shoes of color 0 form a rate $a$ Poisson process, the right shoes of color 0 form a rate $b$
Poisson process;  $\p($no left 0 by time $t ) =e^{-at}$, $\p($no right 0 by time $t ) =e^{-bt}$,  and \emph{in the limit}, the two processes are independent, so $\p($no left 0 and no right 0 by time $t ) =e^{-(a+b)t}$.  Inclusion-exclusion and differentiation leads to the  limit density of the time $T_0$ at which
a left-right pair of color 0 is found,  $f(t) = \left( ae^{-a t} + be^{-bt} - (a+b)e^{-(a+b)t} \right)$, instead of the $c^2 t e^{-ct}$ of Theorem \ref{ell theorem}.   At time $t$, for each of the $n$ other colors we expect, asymptotically, $t/\sqrt{n}$ instances on the left, and
$t/\sqrt{n}$ on the right, with $t^2/n$ for the asymptotic chance of having a pair.   This leads to $\p(\min(T_1,\ldots,T_n)>t) \to \exp(-t^2)$, instead of the $\exp(-t^2/2)$ of Theorem \ref{ell theorem}. 
\end{proof}

\begin{figure}[h]
\includegraphics[scale=.7]{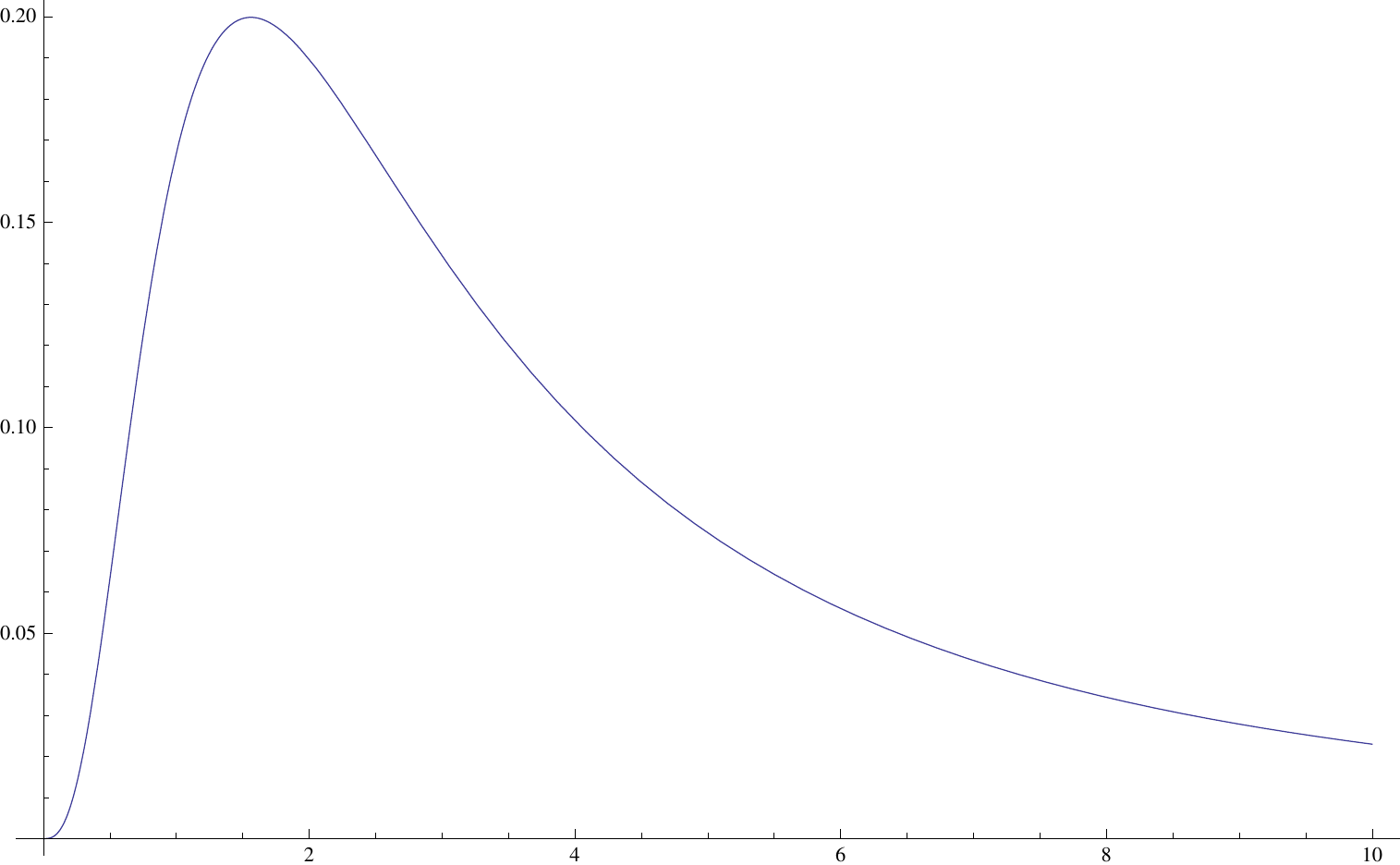}
\caption{Plot of  $\ell(a,a)$, the limit discrepancy $\DDLR$  when $\bp = \bq = \bp(n,a/\sqrt{n})$.  
%The maximum value $0.19980867405313229408$ occurs at $a = 1.5622394408366036099$}
The maximum value $0.19980867\ldots$ occurs at $a = 1.562239\ldots$.\label{ABcurve}}
\end{figure}

\begin{figure}[h]
\includegraphics[scale=.7]{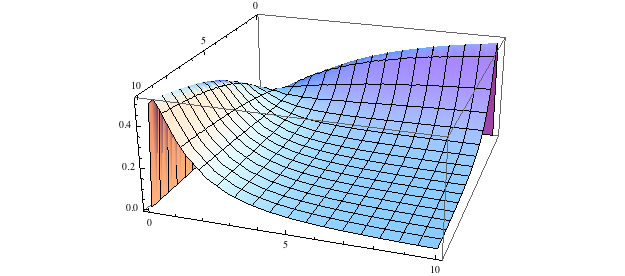}
\caption{Plot of $\ell(a,b)$, the limit discrepancy $\DDLR$  when $\bp = \bp(n,a/\sqrt{n})$ and $\bq = \bp(n,b/\sqrt{n})$.  The curve in Figure \ref{ABcurve}  %  why do we need to hardwire??  \ref{ABcurve} 
lies along the diagonal, splitting the plot into two symmetric pieces.\label{AB3Dcurve}
}
\end{figure}

While we do not have evidence for the analog of Conjecture \ref{conjecture}  --- indeed, it seems daunting to deal with the analog of Section \ref{sect 3 colors}, for left-right pairs under equal distribution for left and right ---  the analog of Conjecture \ref{weakest conjecture} \emph{combined with} \eqref{universal constant} is the following plausible conjecture.  See Figure  \ref{ABcurve}  for the source of the constant $.1998\ldots$ .

\begin{conjecture}\label{conjecture shoes}
 $$ \sup_\bp \DDLL = \max_{a} \ell(a,a) \doteq 0.199808674053.
 $$ 
 \end{conjecture}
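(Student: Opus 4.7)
The plan parallels the argument sketched in the Discussion section for the sock case, split into three stages. First, establish an extremal-structure result: for each $n \geq 1$, among all distributions $\bp$ with support of size $n+1$, the quantity $\DDLL$ is maximized by some member $\bpnxn$ of the one-parameter family \eqref{one parameter} --- the shoe analog of Conjecture \ref{conjecture}. Second, use continuity and a truncation argument to reduce the supremum over all discrete $\bp$ to a supremum along such families. Third, apply Theorem \ref{ell theorem shoes} with $a = b$ to identify the resulting limit with $\max_a \ell(a, a)$.

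For the first stage I would seek a shoe-theoretic analog of Lemma \ref{ratio lemma}. Using the Poissonization underlying Theorem \ref{ell theorem shoes}, the time at which a left-right matching pair of color $i$ first appears is $T_i = \max(L_i, R_i)$, where $L_i$ and $R_i$ are first arrival times in independent Poisson processes whose rates are proportional to $p_i$ (after the time-rescaling of Theorem \ref{ell theorem shoes}). One can then write $\p(Y=i) = \p(T_i < \min_{j \neq i} T_j)$ and attempt to show that, with the other coordinates held fixed, the ratio $\p(Y=i)/p_i^2$ is nonincreasing in $p_i$. Such a monotonicity would force any extremal $\bp$ to concentrate into at most two distinct mass values, reducing the optimization to the one-parameter family $\bpnx$, where it becomes a single-variable calculus problem analogous to Section \ref{sect 2 colors}.

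For the second stage, continuity of $\bp \mapsto \DDLL$ in total variation follows by dominated convergence exactly as for $\bp \mapsto \DD$: any distribution with discrepancy exceeding $\sup - \varepsilon$ can be approximated by a finite-support distribution with discrepancy exceeding $\sup - 2\varepsilon$. For the third stage, a maximizing sequence $\psn = \bpnxn$ yields scaled parameters $c_n := x_n \sqrt{n}$ in the compact set $[0, \infty]$, so some subsequence converges to $c_0$. Theorem \ref{ell theorem shoes}, extended in the routine way to cover the endpoints $c_0 = 0$ and $c_0 = \infty$ (both giving limit discrepancy $0$), then identifies the limiting discrepancy as $\ell(c_0, c_0)$, giving $\sup_\bp \DDLL = \ell(c_0, c_0) \leq \max_a \ell(a, a)$. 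The reverse inequality is immediate, since for any $a > 0$ the sequence $\bp(n, a/\sqrt{n})$ has discrepancies converging to $\ell(a, a)$ by Theorem \ref{ell theorem shoes}.

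The main obstacle is the first stage. Conjecture \ref{conjecture} itself remains unproven for socks beyond $n = 2$, and the shoe analog appears at least as difficult: the left-right alternation couples the two processes in a way that blocks a direct transcription of the combinatorial bijection used in the proof of Lemma \ref{ratio lemma}. Poissonization restores independence across colors and is the most promising route, but one must track two interacting Poisson processes per color and handle the $\max$ structure of $T_i$ rather than the single second-arrival time of Theorem \ref{ell theorem}. A reasonable intermediate goal would be to first settle the weaker Conjecture \ref{weakest conjecture shoes}, using continuity together with uniform tail estimates on the $T_i$ to exclude the trivial upper bound $1$, and only then attempt to pin the supremum down to the exact value $\max_a \ell(a, a) \doteq 0.199808674\ldots$ .
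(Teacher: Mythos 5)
The statement you are addressing is presented in the paper as an open conjecture, not a theorem: the authors explicitly say they ``do not have evidence for the analog of Conjecture \ref{conjecture}'' in the shoe setting and offer no proof of Conjecture \ref{conjecture shoes}. Your proposal is a proof \emph{plan} rather than a proof, and you correctly identify where it breaks: the first stage. That gap is not a technicality --- it is the entire open content of the conjecture. Moreover, one specific claim in your first stage is unjustified and, on the evidence of the sock case, likely insufficient even if established: you assert that a monotonicity of $\p(Y=i)/p_i^2$ ``would force any extremal $\bp$ to concentrate into at most two distinct mass values.'' In the sock setting the exactly analogous monotonicity \emph{is} proved (Lemma \ref{ratio lemma}), and yet Conjecture \ref{conjecture} remains open even for $n\ge 3$; so ratio monotonicity alone does not deliver the extremal structure, and you would need a genuinely new variational or rearrangement argument. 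Note also that Lemma \ref{ratio lemma} compares two coordinates within a \emph{fixed} distribution, whereas your proposed statement varies $p_i$ ``with the other coordinates held fixed,'' which is not even well posed on the simplex without specifying how the removed mass is redistributed.

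The remaining stages are broadly consistent with the strategy the paper sketches in its Discussion for the sock constant \eqref{universal constant}: continuity of $\bp\mapsto \DDLL$ plus finite-support approximation, then compactness of the rescaled parameters $c_n=x_n\sqrt n$ in $[0,\infty]$ and Theorem \ref{ell theorem shoes} to identify the subsequential limit as $\ell(c_0,c_0)$. Two caveats there: (i) Theorem \ref{ell theorem shoes} is itself only sketched in the paper (the finite-$n$ left and right arrival streams strictly alternate and are not independent Poisson processes, so the asserted limiting independence needs justification), and (ii) passing from ``discrepancies along a maximizing sequence converge to $\ell(c_0,c_0)$'' to ``$\sup_\bp \DDLL \le \max_a \ell(a,a)$'' requires the convergence in \eqref{ell limit shoes} to be locally uniform in the parameter, plus the endpoint analysis at $c_0=0,\infty$; these are routine but should be stated. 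The reverse inequality $\sup_\bp \DDLL \ge \max_a \ell(a,a)$ is indeed immediate from Theorem \ref{ell theorem shoes}, and is the only part of the conjecture that is currently within reach. As written, your proposal establishes (modulo the sketched Theorem \ref{ell theorem shoes}) only that lower bound, not the equality claimed in the conjecture.
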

 
\ignore{
 \section{Acknowledgements}

The second author would like to thank Steven Idhe for helpful information regarding the software package Bertini.  
}

 \bibliographystyle{plain}  
\bibliography{Socks}

\end{document}